\renewcommand*{\backref}[1]{}
\renewcommand*{\backrefalt}[4]{%
    \ifcase #1 (Not cited.)%
    \or        (Cited on page~#2.)%
    \else      (Cited on pages~#2.)%
    \fi}
\DeclareMathOperator{\Ann}{Ann}
\DeclareMathOperator{\Ker}{Ker}
\numberwithin{equation}{section}
\def\eqref#1{(\ref{#1})}
\newcommand{\Z}{{\mathbb Z}}
\newcommand{\R}{{\mathbb R}}
\def\1{\sqrt{-1}\:}
\newcommand{\restrict}[1]{{\big|_{{\phantom{|}\!\!}_{#1}}}}
\newcommand{\cntrct}                % contraction with a vector field
{\hspace{2pt}\raisebox{1pt}{\text{$\lrcorner$}}\hspace{2pt}}
\newcommand{\codim}{\operatorname{codim}}
\renewcommand{\dim}{\operatorname{dim}}
\renewcommand{\Im}{\operatorname{Im}}
\newcommand{\ie}{{\em i.e. }}
\newcommand{\eg}{{\em e.g. }}
\renewcommand{\to}{\longrightarrow}
\newcounter{Mycounter}[section]
\newcounter{lemma}[section]
\newcounter{claim}[section]
\newcounter{sublemma}[section]
\newcounter{corollary}[section]
\newcounter{theorem}[section]
\newcounter{conjecture}[section]
\newcounter{proposition}[section]
\newcounter{definition}[section]
\newcounter{example}[section]
\newcounter{remark}[section]
\newcounter{problem}[section]
\newcounter{question}[section]
\newcounter{theoremnonmb}[section]
\tikzset{join/.code=\tikzset{after node path={%
			\ifx\tikzchainprevious\pgfutil@empty\else(\tikzchainprevious)%
			edge[every join]#1(\tikzchaincurrent)\fi}}}
\tikzset{>=stealth',every on chain/.append style={join},
	every join/.style={->}}
\begin{document}
	
\newpage

%\chapter[LCS reduction of the cotangent bundle]{Locally conformally symplectic reduction of the cotangent bundle}

\title[LCS reduction of the cotangent bundle]{Locally conformally symplectic reduction of the cotangent bundle}
\author{Miron Stanciu}
\address{Institute of Mathematics ``Simion Stoilow'' of the Romanian Academy\newline 
	21 Calea Grivitei Street, 010702, Bucharest, Romania\newline 
	\indent	{\em and}\newline
	\indent	University of Bucharest, Faculty of Mathematics and Computer Science, 14 Academiei Str., Bucharest, Romania}
\thanks{\textbf{Declarations:} \\[.05in]
	{\bf Funding:} Partially supported by a grant of Ministry of Research and Innovation, CNCS - UEFISCDI,
	project number PN-III-P4-ID-PCE-2016-0065, within PNCDI
	III.\\
	{\bf Conflicts of interest:} Not applicable.\\
	{\bf Availability of data, material and code:} Not applicable.\\[.1in]
	{\bf Keywords:} Locally conformally symplectic, contact manifold, momentum map, reduction, foliation, cotangent bundle.\\
	{\bf 2010 Mathematics Subject Classification:} 53D20, 53D05.
}
\email{miron.stanciu@imar.ro; mirostnc@gmail.com}

\date{\today}

\begin{abstract}
In \cite{stnc}, we introduced a reduction procedure for locally conformally symplectic manifolds at any regular value of the natural momentum mapping. We use this construction to prove an analogue of a well-known theorem in the symplectic setting about the reduction of cotangent bundles.
\end{abstract}

\maketitle

\hypersetup{linkcolor=blue}
\tableofcontents

\section{Introduction}
Equivariant symplectic reduction is a classical procedure (see \cite[pp. 298-299]{am}, \cite[ch. 7]{br}, \cite{McD-S}) through which one obtains symplectic manifolds by factoring the level sets of the natural momentum mapping given by the Hamiltonian action of a Lie group on an original symplectic manifold. Its importance in understanding symplectic geometry and topology, as well in obtaining new examples, is hard to overestimate.

Before a general reduction theory was properly developed, the reduction of the cotangent bundle was studied by Smale, \cite{smale}, and it was Marsden and Weinstein, \cite{mw1}, \cite{mw2}, who applied further techniques to different physical systems which admitted noncanonical Poisson bracket formalisms, such as the rigid body, fluid or magnetohydrodynamic systems. For an in depth historical overview of symplectic reduction and its deep ties to physical phenomena, see \cite[Chapter 1]{ratiu}.

The canonical symplectic structure of the cotangent bundle of any differentiable manifold has a kind of universality property with respect to reduction. Namely, if a group $G$ acts on a manifold $Q$ such that the quotient $Q/G$ is a manifold, the action can be naturally lifted to a Hamiltonian action on the cotangent bundle $T^*Q$ and the reduction at 0 of $T^*Q$ is symplectomorphic with the cotangent bundle $T^*(Q/G)$. When performing reduction at a non-zero regular value of the natural momentum map, the symplectomorphism becomes a symplectic embedding, see 
\ref{thm:embeddingRatiu} where we reproduce the theorem proved in \cite[Theorem 2.2.1]{ratiu}.

Locally conformally symplectic (LCS for short) manifolds are quotients of symplectic manifolds by a discrete group of homotheties of the symplectic form. Alternatively, they are manifolds which are locally conformal to symplectic manifolds; this can be reformulated to require that a non-degenerate $2$-form satisfy a differential equation. Although the definition (not explicitly stated) appears in 1943, \cite{lee}, this geometry is widely studied only since the seminal work of I. Vaisman, \cite{va_lcs}, in which the fact that they are natural phase spaces of Hamiltonian dynamical systems, more general than the symplectic manifolds, is also pointed out, thereby giving a physics motivation behind their further study. For a survey on recent progress in LCS geometry, see \cite{baz}. A result due to  Eliashberg \& Murphy, \cite{el}, showed that LCS structures exist on all compact almost complex manifolds with non-trivial one-cohomology (see Section \ref{sectionPrelim} for a precise statement). This gave new impetus on the research on LCS manifolds.

Symplectic reduction  was first adapted to the LCS setting by Haller \& Rybicki, \cite{hr}. They introduced a method of reducing submanifolds of LCS manifolds with special properties, but which does not apply, generally, to level sets of natural momentum maps determined by a Lie group action. In \cite{stnc}, we introduced an LCS reduction scheme which generalizes the equivariant symplectic method and works, with similar hypotheses, for any regular value of the natural momentum map.

On the other hand, the cotangent bundle of a manifold has  many LCS structures, given by  choices of a closed $1$-form on the manifold (this was first noticed by \cite[Example 3.1]{hr2}). It is then natural to try to adapt the symplectic cotangent reduction theorems to the LCS setting. A useful  observation is that if a Lie group acts on a  manifold, determining a principal bundle, then the corresponding action on the cotangent bundle has the same natural momentum map considered with the symplectic structure, as well as with any LCS structure.

Our main result in this paper (\ref{thm:embedding}) is that the universality result of \cite[Theorem 2.2.1]{ratiu}, reproduced in \ref{thm:embeddingRatiu}, is true, in the same conditions, for the LCS structures of a cotangent bundle, reduced as in \cite{stnc}, at any regular value. As the precise statement depends on a few definitions and notations which will be presented in the preliminaries, for now we can only give an outline of the result at regular value zero:
\begin{theoremnonmb}
	Let $Q$ be a manifold with a free and proper action of a Lie group $G$ and take $\overline{\theta} \in \Omega^1(Q/G)$ a closed $1$-form. Denote by $\theta = p^* \overline{\theta}$, where $p:Q \to Q/G$ is the natural projection.
	
	By the result of \cite{hr2}, these $1$-forms determine two explicit LCS structures: one on $T^*Q$ given by $\theta$, and one on $T^*(Q/G)$ given by $\overline{\theta}$.
	
	Moreover, $G$ has an action on $T^*Q$ which is compatible with its LCS structure. Let $\mu:T^*Q \to \mathfrak{g}^*$ be the corresponding natural momentum mapping.
	
	Then, denoting by $(T^*Q)_0$ the LCS reduction at regular value $0$ of $T^*Q$, there is a canonical and explicit LCS isomorphism 
	\[
	(T^*Q)_0 \simeq T^*(Q/G).
	\]
	
\end{theoremnonmb}

For a general value $\xi$, additional hypotheses have to be assumed to obtain the desired result, namely that there is an LCS embedding between the natural LCS manifolds that occur. We follow the main result with a discussion of those hypotheses, providing both examples and counterexamples of cases in which they cannot hold and moreover the result itself is not true.

\bigskip

In addition to the result itself, this shows the naturality of the reduction scheme introduced in \cite{stnc}.

The paper is organized as follows. In Section \ref{sectionPrelim} we introduce the necessary background for LCS geometry and recall our constructions in \cite{stnc}. Section \ref{main_section} recalls the statement of the cotangent reduction theorems in symplectic geometry, then presents the statement and proof of our main result. The paper ends with the description of several examples which show the necessity of the hypotheses in our statement.

\section{Twisted Hamiltonian actions and LCS reduction}
\label{sectionPrelim}

\subsection{Background on LCS geometry}
We use this subsection to explain the setting for our theorem, starting with a few basic definitions .

\begin{definition}
	A manifold $M$ with a non-degenerate two-form $\omega$ is called \textit{locally conformally symplectic} (for short, \textit{LCS}) if there exists a closed one-form $\theta$ such that 
	$$d\omega = \theta \wedge \omega.$$
	In that case, $\omega$ is called the LCS form and $\theta$ is called {\em the Lee form} of $\omega$. 
\end{definition}

\bigskip

It was recently proven by Y. Eliashberg and E. Murphy that LCS manifolds are widespread:

\begin{theorem}\textnormal {(\cite{el})}
	\label{thm:eliashberg}
	Let $(M, J)$ be a closed $2n$-dimensional almost complex manifold and $[\eta]$ a non-zero cohomology class in $H^1(M, \Z)$. Then there exists a locally conformally symplectic structure $(\omega,\theta)$ in a formal homotopy class determined by $J$, with $\theta = c \cdot \eta$ for some real $c \neq 0$. 
\end{theorem}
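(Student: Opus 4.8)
The plan is to reduce Theorem~\ref{thm:eliashberg} to an application of the symplectic $h$-principle, in the ``relative, over a cobordism'' form developed by Eliashberg and Murphy, via a standard dictionary between LCS structures with a fixed Lee class and equivariant conformal symplectic structures on a cyclic cover. After rescaling we may assume $\eta$ is a primitive integral class, so the associated homomorphism $\pi_1(M)\to\Z$ determines an infinite cyclic cover $p\colon\widehat M\to M$ with deck generator $A$. A closed one-form $\theta$ with $[\theta]=c\,\eta$ pulls back to $p^*\theta=d\varphi$ with $\varphi\circ A=\varphi+c$, and a direct computation shows that giving an LCS pair $(\omega,\theta)$ on $M$ with $\theta=c\,\eta$ is equivalent to giving a symplectic form $\widehat\omega$ on $\widehat M$ with $A^*\widehat\omega=e^{-c}\,\widehat\omega$, the correspondence being $\widehat\omega=e^{-\varphi}\,p^*\omega$; one checks $d\widehat\omega=0$ using $d\omega=\theta\wedge\omega$. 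Since $M$ is closed and $[\eta]\neq 0$, the cover $\widehat M$ is an \emph{open} manifold, which is the source of the flexibility we will exploit.

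Next I would cut $M$ open along a closed hypersurface $\Sigma\subset M$ Poincar\'e dual to $\eta$ (such $\Sigma$ exists and is non-separating because $[\eta]\neq 0$), obtaining a compact connected cobordism $W$ with $\partial W=\Sigma_-\sqcup\Sigma_+$, two copies of $\Sigma$, so that $\widehat M=\bigcup_{k\in\Z}A^k(W)$ with $A$ carrying $\Sigma_+$ to $\Sigma_-$. Under this picture, an $A$-equivariant symplectic form on $\widehat M$ is the same as the following data on the single piece $W$: a symplectic form $\Omega$ together with a symplectic germ $\beta$ along $\Sigma$ such that $\Omega$ equals $\beta$ near $\Sigma_-$ and equals $e^{-c}\beta$ near $\Sigma_+$ after the boundary identification. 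So the problem becomes: find a symplectic structure on the cobordism $W$ interpolating between a chosen germ along $\Sigma$ and a conformally rescaled copy of the \emph{same} germ, where the factor $e^{-c}$, $c\neq 0$, can be chosen as convenient. The almost complex structure enters here through the formal solution: using $J$ one first builds a non-degenerate two-form on $M$ — equivalently an $A$-equivariant non-degenerate two-form on $\widehat M$, equivalently formal data of the above type on $W$ — and this is exactly the datum that pins down the ``formal homotopy class determined by $J$'' in the statement.

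The analytic engine is then the $h$-principle for making cobordisms symplectic: an almost complex cobordism equipped with formal data of the above kind and with prescribed (here, conformally related) symplectic germs on its two boundary components admits a genuine symplectic form realizing them, once one checks there is no \emph{a priori} cohomological obstruction. For a \emph{closed} symplectic manifold the class $[\omega]^n$ is such an obstruction, but $W$ has non-empty boundary, and, more to the point, on $M$ the LCS form is genuinely non-closed when $2n\geq 4$ (since $d\omega=\theta\wedge\omega\neq0$), so the would-be obstruction is absorbed by the freedom in the Lee form; the case $2n=2$ is elementary, as any area form on a closed surface of positive genus is LCS with $\theta$ any closed one-form. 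Granting the cobordism $h$-principle in the needed form — this is the substantive input from \cite{el}, and the step I expect to be the real obstacle, since it combines Gromov-type convex-integration flexibility with careful bookkeeping of the two boundary germs and the conformal scaling, together with an equivariant/relative packaging — one glues the resulting forms $A^k(W)\mapsto(A^k)^*\Omega$ into the equivariant $\widehat\omega$ on $\widehat M$, pushes it down to the LCS pair $(\omega,\theta=c\,\eta)$ on $M$, and verifies that the homotopy of non-degenerate forms furnished by the $h$-principle keeps the structure in the formal class determined by $J$.
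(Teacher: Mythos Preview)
The paper does not prove this theorem; it merely quotes it as a background result from \cite{el} (Eliashberg--Murphy, \emph{Making cobordisms symplectic}) and moves on. There is therefore no ``paper's own proof'' to compare your proposal against.

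That said, your outline is a fair sketch of the Eliashberg--Murphy strategy: the dictionary between LCS structures with Lee class $c[\eta]$ and $e^{-c}$-equivariant symplectic forms on the infinite cyclic cover, the cut along a Poincar\'e-dual hypersurface to reduce to a cobordism problem with conformally matched boundary germs, and the invocation of their flexible/overtwisted $h$-principle as the analytic core. You are also right to flag the cobordism $h$-principle itself as the genuine content --- everything else is packaging. One caution: the Eliashberg--Murphy result requires an overtwistedness hypothesis on the boundary contact data (or an analogous flexibility condition) to make the $h$-principle go through, and arranging this is part of their argument; your sketch suppresses this point entirely. If you intend to actually write up a proof rather than cite one, that is the step where the real work lies.
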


\begin{definition}
	Let $(M, \omega, \theta)$ be an LCS manifold. Define the \textit{twisted de Rham operator}
	\[
	d_\theta = d - \theta \wedge \ : \Omega^\bullet(M) \to \Omega^{\bullet + 1}(M).
	\]
	In particular, $d_\theta\omega=0$.	Since $d\theta = 0$, we have $d_\theta^2 = 0$, so this operator defines the so-called {\em twisted} or {\em Lichnerowicz} cohomology
	\[
	H^\bullet_\theta (M)= \frac{\Ker d_\theta}{\Im d_\theta}.
	\]
\end{definition}

\begin{definition}
	On an LCS manifold $(M, \omega, \theta)$, we can introduce a new differential operator:
	\[
	\mathcal{L}^\theta_X : \Omega^\bullet(M) \to \Omega^\bullet(M)
	\]
	the twisted Lie derivative along the vector field $X$, via the formula
	\[
	\mathcal{L}_X^\theta \alpha = \mathcal{L}_X \alpha - \theta(X) \alpha.
	\]
	
	Using the usual Cartan equality, we see that $\mathcal{L}^\theta_X$ satisfies a similar equality for the operator $d_\theta$, namely
	\[
	\mathcal{L}_X^\theta \alpha = d_\theta i_{X} \alpha + i_{X} d_\theta \alpha.
	\]
\end{definition}

\begin{definition}
	In a symplectic vector space $(V, \omega)$, given a subspace $W \le V$, we denote by
	\[
	W^\omega = \{ v \in V \ | \ \omega (v, W) = 0 \} \le V
	\]
	the $\omega$-dual of $W$. Since $\omega$ is non-degenerate, we have 
	\[
	\dim W + \dim W^\omega = \dim V.
	\]
	Note that the above sum is not necessarily direct.
\end{definition}

\begin{remark}
	The LCS condition is conformally invariant in the following sense. If $\omega$ is an LCS form on $M$ with Lee form $\theta$, then $e^f \omega$ is also an LCS form with Lee form $\theta + df$, for any $f \in \mathcal{C}^\infty(M)$. Therefore, it is sometimes more interesting to work with an LCS structure 
	\[
	[(\omega, \theta)] = \{ (e^f \omega, \theta + df) \ | \ f \in \mathcal{C}^\infty(M) \}
	\]
	on $M$ rather than with particular forms in that structure. Note that there exists a symplectic form conformal to a given LCS form with Lee form $\theta$ if and only if $[\theta] = 0 \in H^1(M)$; in this case, the structure is called {\em globally conformally symplectic (GCS)}. 
\end{remark}

\bigskip

The following example gives a class of LCS structures on any cotangent bundle and will provide the setting for the main theorem of this article:

\begin{example}(\cite{hr2})
	\label{ex:cotangentLCS}
	In addition to the canonic symplectic structure, the cotangent bundle of a given manifold $Q$ can be endowed with LCS structures: Consider the tautological $1$-form $\eta$ on $T^*Q$, defined in any point $\alpha_x \in T^*_xQ$ by
	\[
	\eta_{\alpha_x} (v) = \alpha_x (\pi_* v).
	\]
	Then, for any $\theta$ a closed $1$-form on $Q$, $\omega_\theta = d_{\pi^* \theta} \eta$ is an LCS form on $T^*Q$ (which is not globally conformally symplectic unless $\theta$ is exact).
\end{example}

\bigskip

\subsection{Twisted Hamiltonian actions}

We now define the notions of twisted Hamiltonian actions and of natural momentum mappings (these are the same used by \cite{va_lcs}, \cite{hr}, \cite{bgp} and \cite{nico}, among others).

\begin{definition}
	Let $(M, \omega, \theta)$ be an LCS manifolds and $G$ a Lie group acting on it. 
	
	\begin{enumerate}[i)]
		\item For any $a \in \mathfrak{g}$, denote by $X_a$ the fundamental vector field on $M$ associated to $a$,
		\[
		(X_a)_x = \frac{d}{dt}_{|t=0} (e^{ta} \cdot x).
		\]
		\item The action is called \textit{twisted symplectic} if 
		\[
		g^* \omega = e^{\varphi_g} \omega \text{ and } g^* \theta = \theta + d\varphi_g, \forall g \in G,
		\]
		for $\varphi_g$ some smooth function on $M$.
		
		\item The action is called \textit{twisted Hamiltonian} if, in addition, for any $a \in \mathfrak{g}$, the $1$-form $i_{X_a} \omega$ is $d_\theta$-exact, say
		\[
		d_\theta \rho_a = i_{X_a} \omega.
		\] 
	\end{enumerate}
\end{definition}

\begin{remark}
	\label{rem:natural}
	If $\theta$ is not exact, then the functions $\rho_a$ as above are uniquely determined (see \eg \cite[Proposition $2.4$]{stnc}). As the non-symplectic setting is the interesting one for us, we will assume this to be the case.
\end{remark}

\begin{proposition}
\label{prop:theta(Xa)}
	If $\omega$ (and thus $\theta$) is $G$-invariant and the action is twisted Hamiltonian, then $\theta(X_a) = 0$ for all $a \in \mathfrak{g}$.
	
	\begin{proof}
		Indeed,
		\[
		0 = d_\theta i_{X_a}\omega = \mathcal{L}_{X_a}^\theta \omega - i_{X_a} d_\theta \omega = - \theta(X_a) \omega.
		\]
		Since $\omega$ is non-degenerate, this implies $\theta(X_a) = 0$, as required.
	\end{proof}
\end{proposition}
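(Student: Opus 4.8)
The plan is to differentiate the defining relation of a twisted Hamiltonian action and exploit that $d_\theta$ squares to zero. By hypothesis, for each $a \in \mathfrak{g}$ there is a function $\rho_a$ with $d_\theta \rho_a = i_{X_a}\omega$; applying $d_\theta$ to both sides and using $d_\theta^2 = 0$ (which holds because $d\theta = 0$) gives immediately $d_\theta i_{X_a}\omega = 0$.

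Next I would rewrite $d_\theta i_{X_a}\omega$ using the twisted Cartan formula $\mathcal{L}^\theta_{X_a}\alpha = d_\theta i_{X_a}\alpha + i_{X_a} d_\theta \alpha$ with $\alpha = \omega$. Since $\omega$ is $d_\theta$-closed — indeed $d_\theta\omega = d\omega - \theta\wedge\omega = 0$ is exactly the LCS condition — the term $i_{X_a} d_\theta\omega$ drops out, so $d_\theta i_{X_a}\omega = \mathcal{L}^\theta_{X_a}\omega$. Expanding the twisted Lie derivative, $\mathcal{L}^\theta_{X_a}\omega = \mathcal{L}_{X_a}\omega - \theta(X_a)\,\omega$, and the $G$-invariance of $\omega$ kills the ordinary Lie derivative term, because the flow of $X_a$ consists of elements of $G$ and these preserve $\omega$. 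Combining the pieces yields $0 = -\theta(X_a)\,\omega$, and since $\omega$ is non-degenerate (in particular nowhere zero) this forces $\theta(X_a) = 0$, for every $a \in \mathfrak{g}$.

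There is essentially no hard step here; the only points requiring a little care are (i) recognizing that the LCS identity is precisely $d_\theta\omega = 0$, so the twisted Cartan formula collapses to a single term, and (ii) using ``$G$-invariant'' in the strong sense $g^{*}\omega = \omega$ (rather than merely $g^{*}\omega = e^{\varphi_g}\omega$), which is what makes $\mathcal{L}_{X_a}\omega = 0$. One can in fact compress the whole argument into the single chain $0 = d_\theta i_{X_a}\omega = \mathcal{L}^\theta_{X_a}\omega - i_{X_a} d_\theta\omega = -\theta(X_a)\,\omega$ and conclude; note that the twisted symplectic hypothesis is not separately needed once invariance is assumed, as it is the twisted Hamiltonian condition $d_\theta\rho_a = i_{X_a}\omega$ that drives the computation.
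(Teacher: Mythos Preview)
Your argument is correct and follows exactly the same route as the paper's proof: both use $d_\theta i_{X_a}\omega = 0$, the twisted Cartan identity, $d_\theta\omega = 0$, and $G$-invariance of $\omega$ to obtain $-\theta(X_a)\,\omega = 0$, then conclude by non-degeneracy. The compressed chain you give at the end is precisely the one-line computation in the paper.
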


\begin{definition}
	For a twisted Hamiltonian action of $G$ on $(M, \omega, \theta)$, keeping the notations above, define the \textit{natural momentum mapping} to be:
	\[
	\mu:M \to \mathfrak{g}^*, \ \mu(x)(a) = \rho_a(x), \ \forall a \in \mathfrak{g}.
	\]
\end{definition}

\begin{remark}
	Since we operate under the assumption that $\theta$ is not exact, the above momentum mapping is indeed natural, as it does not depend on any choice for the functions $\rho_\theta$ (see \ref{rem:natural} as opposed to the symplectic case, where the momentum mapping is not generally unique, see \eg \cite[Lecture 7]{br}). From now on, we will refer to it in short as the momentum mappping.
\end{remark}

\begin{proposition}
\label{rem:twistatHamcuOmegaExact}
	Let $(M, \omega, \theta)$ be an LCS manifold with a group action $G$. Assume $\omega = d_\theta \eta$, with $\eta$ and $\theta$ $G$-invariant. 
	
	Then the action is twisted Hamiltonian with momentum mapping
	\[
	\mu(x)(a) = -\eta_x ((X_a)_x).
	\]
	
	\begin{proof}
		We have, by \ref{prop:theta(Xa)},
		\[
		0 = \mathcal{L}_{X_a} \eta = \mathcal{L}^\theta_{X_a} \eta = d_\theta (\eta(X_a)) + i_{X_a}\omega \implies i_{X_a} \omega = - d_\theta(\eta(X_a)).
		\]
\end{proof}	
\end{proposition}

\subsection{Lifting of actions to the LCS cotangent bundle}

We now use \ref{rem:twistatHamcuOmegaExact} to make a quick computation in order to determine the momentum mapping for the action of a group on the cotangent bundle endowed with an LCS structure, as in \ref{ex:cotangentLCS}.

Recall that, given an action of a Lie group $G$ on a manifold $Q$, one can define a natural action of $G$ on the cotangent bundle $T^*Q$ via push-forward:
\[
g \cdot \alpha_q = (g^{-1})^* (\alpha_q) \in T^*_{g \cdot q}Q, \text{ for any } q \in Q.
\]

\begin{proposition}
\label{prop:momentumCTG}
	Consider $Q$ a manifold and $G$ a connected Lie group acting freely and properly on it. Let $\theta$ be a closed $1$-form on $Q$ such that $\theta(X_a) = 0$, for all $a \in \mathfrak{g}$.
	
	\begin{enumerate}
		\item[\rm{i)}] Denote by $\tilde{X}_a$ the fundamental vector fields of the natural action of $G$ on $T^*Q$ by push-forward. Then $\pi_* \tilde{X}_a = X_a$.
		\item[\rm{ii)}] This action is twisted Hamiltonian.
		\item[\rm{iii)}] The momentum mapping is given by the formula
		\[
		\mu:T^*Q \to \mathfrak{g}^*, \ \mu(\alpha)(a) = -\alpha(X_a).
		\]
		and all its values are regular.
	\end{enumerate}

	\begin{proof}
		We compute the projection of the fundamental vector fields:
		\begin{equation*}
		\begin{split}
		\pi_* ((\tilde{X}_a)_{\alpha_q}) &= \pi_* (\frac{d}{dt}{_\restrict{t=0}} e^{ta} \cdot \alpha_q) = \frac{d}{dt}_{t=0} (\pi(e^{ta} \cdot \alpha_q)) \\
		&= \frac{d}{dt}_{t=0} (e^{ta} \cdot q) = (X_a)_q.
		\end{split}
		\end{equation*}
		
		For the last two items, we use the criterion given in \ref{rem:twistatHamcuOmegaExact}. Indeed, $\omega_\theta = d_{\pi^* \theta} \eta$ and the tautological form $\eta$ is $G$-invariant. Furthermore,
		\[
		\mathcal{L}_{X_a} \theta = d(\theta(X_a)) + i_{X_a} d\theta = 0,
		\]
		so $\theta$ is also $G$-invariant. By \ref*{rem:twistatHamcuOmegaExact}, the action is twisted Hamiltonian with momentum mapping
		\[
		\mu(\alpha)(a) = -\eta_{\alpha} ((\tilde{X}_a)_\alpha) = -\alpha(X_a).
		\]
		Notice that $\mu$ does not depend on the chosen $\theta \in \Omega^1(Q)$.
		
		Take $\xi \in \mathfrak{g}^*$. For $\xi$ to be a regular value, we must show that $\codim \Ker d_\alpha \mu = \dim G$ for any $\alpha \in \mu^{-1}(\xi)$. Take $\omega_0 = d\eta$. Then a simple calculation shows
		\[
		(\Ker d \mu)^{\omega_0} = \{ X_a \ | \ a \in \mathfrak{g} \}.
		\]
		As the action is free and proper, the right hand side is of dimension $\dim G$.
	\end{proof}
	
\end{proposition}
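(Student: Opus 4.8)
The plan is to dispatch the three items separately: item (i) is a direct differentiation, items (ii)--(iii) are obtained by feeding the tautological $1$-form into the criterion of \ref{rem:twistatHamcuOmegaExact}, and the regularity statement is obtained by comparing $\mu$ with the momentum map of the canonical symplectic structure $\omega_0 = d\eta$. For (i) I would simply note that the lifted action on $T^*Q$ is fibrewise over the action on $Q$, so $\pi$ is $G$-equivariant; applying $\pi$ to the orbit curve $t \mapsto e^{ta}\cdot\alpha_q$ and differentiating at $t=0$ yields $\pi_*(\tilde X_a)_{\alpha_q} = \frac{d}{dt}\big|_{0}\pi(e^{ta}\cdot\alpha_q) = \frac{d}{dt}\big|_{0}(e^{ta}\cdot q) = (X_a)_q$. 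Nothing beyond the chain rule is involved here.

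For (ii) and (iii) the first step is to check that the data defining $\omega_\theta = d_{\pi^*\theta}\eta$ is $G$-invariant. The tautological form $\eta$ is invariant under any cotangent lift: the identity $g^*\eta = \eta$ follows at once from $\eta_{\alpha_x}(v) = \alpha_x(\pi_* v)$ together with $\pi\circ g = g\circ\pi$. The form $\pi^*\theta$ is $G$-invariant because $\theta$ is: since $\theta$ is closed and $\theta(X_a)=0$, Cartan's formula gives $\mathcal{L}_{X_a}\theta = d\,i_{X_a}\theta + i_{X_a}d\theta = 0$, which gives invariance under the identity component (and if $G$ is disconnected one uses that the standing hypothesis is $G$-invariance, not merely its infinitesimal shadow). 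Then \ref{rem:twistatHamcuOmegaExact} applies verbatim: the action is twisted Hamiltonian and
\[
\mu(\alpha)(a) = -\eta_\alpha\big((\tilde X_a)_\alpha\big) = -\alpha\big(\pi_*(\tilde X_a)_\alpha\big) = -\alpha(X_a),
\]
the last equality by (i). In particular this expression does not involve $\theta$, so taking $\theta = 0$ shows that $\mu$ is exactly the momentum map attached to the canonical symplectic form $\omega_0 = d\eta$.

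For regularity I would fix $\alpha \in T^*Q$ and $\xi = \mu(\alpha)$ and show $\codim \Ker d\mu_\alpha = \dim G$. Working with $\omega_0$, whose momentum map is the same $\mu$, the defining relation of a symplectic momentum map reads $d\mu_\alpha(v)(a) = \omega_0\big((\tilde X_a)_\alpha, v\big)$ for $v \in T_\alpha(T^*Q)$, $a \in \g$; hence $\Ker d\mu_\alpha$ is precisely the $\omega_0$-orthogonal of $\operatorname{span}\{(\tilde X_a)_\alpha : a \in \g\}$, and by non-degeneracy of $\omega_0$,
\[
(\Ker d\mu_\alpha)^{\omega_0} = \operatorname{span}\{(\tilde X_a)_\alpha : a \in \g\}.
\]
Because the action on $Q$ is free and proper, its cotangent lift is again free (the stabilizer of $\alpha$ sits inside the stabilizer of $\pi(\alpha)$), so $a \mapsto (\tilde X_a)_\alpha$ is injective and the right-hand side has dimension $\dim G$; applying $(-)^{\omega_0}$ once more and using non-degeneracy gives $\codim \Ker d\mu_\alpha = \dim G$, so $\xi$ is a regular value.

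The one place that genuinely needs care — really the only obstacle — is this last step: it relies on the cotangent-lifted action being (locally) free, so that the fundamental vector fields $\tilde X_a$ are pointwise linearly independent, which is exactly where the principal-bundle hypothesis on $Q \to Q/G$ enters; a merely proper action with positive-dimensional stabilizers would make the right-hand side above too small and destroy regularity. Everything else is a routine unwinding of definitions plus a single invocation of \ref{rem:twistatHamcuOmegaExact}.
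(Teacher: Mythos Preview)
Your proof is correct and follows essentially the same route as the paper: part (i) by differentiating the orbit curve, parts (ii)--(iii) by invoking \ref{rem:twistatHamcuOmegaExact} after checking $G$-invariance of $\eta$ and $\theta$, and regularity via the $\omega_0$-orthogonal of $\Ker d\mu$. You in fact sharpen one point the paper glosses over: the line ``as the action is proper, the right-hand side is of dimension $\dim G$'' really needs (local) freeness, not mere properness, and your closing paragraph correctly isolates this as the place where the principal-bundle hypothesis is used.
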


\bigskip

\subsection{The LCS reduction method}

In order to make this article more self-contained, we end by reproducing the LCS reduction theorem proven in \cite{stnc}, accompanied by a few technical remarks we will need later.

\begin{theorem}\textnormal{(\cite[Theorem 3.15]{stnc})}
	\label{th:LCSred}
	Let $(M, \omega, \theta)$ be a connected LCS manifold and $G$ a connected Lie group acting twisted Hamiltonian on it.
	
	Let $\mu$ be the momentum mapping and $\xi \in \mathfrak{g}^*$ a regular value. Denote by $\mathcal{F} = T\mu^{-1}(\xi) \cap (T\mu^{-1}(\xi))^\omega$. Assume that one of the following conditions is met:
	\begin{itemize}
		\item The action of $G$ preserves the LCS form $\omega$.
		\item $\xi \wedge \theta_x(X_\cdot) = 0$ for all $x \in \mu^{-1}(\xi)$ and there exists a function $h$ on $\mu^{-1}(\xi)$ such that $\theta_{| \mathcal{F}} = dh$.
	\end{itemize}

	If $M_\xi := \faktor{\mu^{-1}(\xi)}{\mathcal{F}}$ is a smooth manifold and $\pi : \mu^{-1}(\xi) \to M_\xi$ is a submersion, then $M_\xi$ has an LCS structure such that the LCS form $\omega_\xi$ satisfies
	\[
	\pi^* \omega_\xi = e^f \omega_{|\mu^{-1}(\xi)}
	\]
	for some $f \in C^\infty(\mu^{-1}(\xi))$. 
	
	Moreover, one can take $f = h$; in particular, $f = 0$ if the action preserves the LCS form.
\end{theorem}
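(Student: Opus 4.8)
I would imitate the classical Marsden--Weinstein argument, the new ingredients being an explicit description of the characteristic distribution $\mathcal F$ in terms of the momentum map and the choice of the conformal factor $e^f$ that makes the restricted form basic. Write $i\colon N:=\mu^{-1}(\xi)\hookrightarrow M$ for the inclusion and use $d_\theta\rho_a=i_{X_a}\omega$, i.e.\ $d\rho_a=i_{X_a}\omega+\rho_a\theta$, to get, at $x\in N$,
\[
T_xN=\ker d\mu_x=\{\,v\in T_xM:\ \omega((X_a)_x,v)=-\xi(a)\,\theta_x(v)\ \ \forall a\in\mathfrak g\,\}.
\]
Let $B$ be the vector field with $i_B\omega=\theta$. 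The annihilator of $T_xN$ in $T_x^*M$ is the image of $\mathfrak g$ under $a\mapsto d\rho_a|_x=i_{(X_a)_x+\xi(a)B_x}\omega$, so non-degeneracy of $\omega$ gives
\[
(T_xN)^\omega=\{\,(X_a)_x+\xi(a)B_x:\ a\in\mathfrak g\,\},\qquad \mathcal F_x=T_xN\cap(T_xN)^\omega=\{\,(X_a)_x+\xi(a)B_x:\ a\in\mathfrak g_\xi\,\},
\]
$\mathfrak g_\xi$ being the coadjoint isotropy of $\xi$; the second equality uses that $(X_a)_x+\xi(a)B_x\in\ker d\mu_x$ iff $\langle\xi,[\mathfrak g,a]\rangle=0$, which follows from $\omega((X_a)_x,(X_b)_x)=\rho_{[a,b]}(x)$ (equivariance of $\mu$) and $\theta(X_a)=0$. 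When the action preserves $\omega$, Proposition \ref{prop:theta(Xa)} gives $\theta(X_a)=0$, and from $\mathcal L_{X_a}\theta=0$ one gets $[X_a,B]=0$, so the brackets of these spanning fields close modulo $\mathcal F$; hence $\mathcal F$ is an involutive distribution of constant rank, and in the compact case the hypotheses $\xi\wedge\theta_x(X_\cdot)=0$ and $\theta|_{\mathcal F}=dh$ serve the same purpose. Thus $N/\mathcal F$ is a bona fide foliation quotient, which under the standing assumption is a manifold $M_\xi$ with $\pi\colon N\to M_\xi$ a surjective submersion and $\ker d\pi=\mathcal F$.

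Next I would show that $e^f i^*\omega$ is basic for $\mathcal F$, where $f=0$ in the first case and $f=h$ in the second. Horizontality, $i_Y(e^f i^*\omega)=0$ for $Y\in\mathcal F$, is immediate from $\mathcal F_x\subseteq(T_xN)^\omega$. For $\mathcal F$-invariance, Cartan's formula and $d\omega=\theta\wedge\omega$ give, for $Y\in\mathcal F$,
\[
\mathcal L_Y\!\bigl(e^f i^*\omega\bigr)=i_Y\,d\!\bigl(e^f i^*\omega\bigr)=\bigl((df+i^*\theta)(Y)\bigr)\,e^f i^*\omega ,
\]
which vanishes exactly because $(df+i^*\theta)|_{\mathcal F}=0$: in the first case $\theta|_{\mathcal F}=0$, since $\theta(X_a)=0$ and $\theta(B)=\omega(B,B)=0$; in the second case $\theta|_{\mathcal F}=dh$ is the hypothesis. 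Being horizontal and invariant, $e^f i^*\omega$ descends to a unique $\omega_\xi\in\Omega^2(M_\xi)$ with $\pi^*\omega_\xi=e^f\,\omega|_{\mu^{-1}(\xi)}$.

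Finally I would check that $(M_\xi,\omega_\xi)$ is LCS. Non-degeneracy is clear since $\ker(i^*\omega)_x=T_xN\cap(T_xN)^\omega=\mathcal F_x=\ker d\pi_x$. Differentiating, $\pi^*(d\omega_\xi)=d(e^f i^*\omega)=(df+i^*\theta)\wedge\pi^*\omega_\xi$. The one-form $df+i^*\theta$ is horizontal by the previous step and $\mathcal F$-invariant because $\mathcal L_Y(df+i^*\theta)=i_Y\,i^*(d\theta)=0$, hence basic: $df+i^*\theta=\pi^*\theta_\xi$ for a unique $\theta_\xi\in\Omega^1(M_\xi)$, which is closed since $\pi^*(d\theta_\xi)=d(df+i^*\theta)=0$ and $\pi^*$ is injective. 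Then $\pi^*(d\omega_\xi)=\pi^*(\theta_\xi\wedge\omega_\xi)$, so $d\omega_\xi=\theta_\xi\wedge\omega_\xi$; by construction $f=0$ when the action preserves $\omega$ and $f=h$ in general, which is the ``moreover''.

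The main obstacle is the second alternative in the first step: lacking any invariance of $\omega$, one has to wring out of the compactness of $G$, the condition $\xi\wedge\theta_x(X_\cdot)=0$, and the primitive $h$ enough to guarantee that $\mathcal F$ is still an involutive distribution of constant rank and that $e^h i^*\omega$ is $\mathcal F$-invariant. Once $\mathcal F$ is understood, the remaining two steps are merely the conformally twisted transcription of the usual reduction computation.
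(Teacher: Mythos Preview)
The paper does not actually prove this theorem: it is quoted verbatim from \cite{stnc} (``we end by reproducing the LCS reduction theorem proven in \cite{stnc}''), and only the auxiliary formulas
\[
(T\mu^{-1}(\xi))^\omega=\{X_a+\xi(a)\theta^\omega\mid a\in\mathfrak g\},\qquad \mathcal F=\{X_a+\xi(a)\theta^\omega\mid a\in\mathfrak g_\xi\}
\]
are recorded in Remark~\ref{rmk:explicatiiLCSred}. Your derivation of precisely these two formulas (with $B=\theta^\omega$) is therefore exactly what the paper extracts from \cite{stnc}, and your overall scheme---identify $\mathcal F$ as the null distribution of $i^*\omega$, show $e^f i^*\omega$ is $\mathcal F$-basic, then push down and verify the LCS equation---is the expected conformal transcription of Marsden--Weinstein and is presumably close to what \cite{stnc} does.

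Two comments on the write-up itself. First, there is a sign slip in the second bullet: with $f=h$ and the hypothesis $\theta|_{\mathcal F}=dh$, your computation gives $(df+i^*\theta)|_{\mathcal F}=2\,dh|_{\mathcal F}$, not $0$; what you need is $df|_{\mathcal F}=-\theta|_{\mathcal F}$, so either $f=-h$ or the hypothesis should read $\theta|_{\mathcal F}=-dh$. This is cosmetic, but worth fixing. Second, your appeal to ``equivariance'' $\omega(X_a,X_b)=\rho_{[a,b]}$ and to $\theta(X_a)=0$ is only justified under the first hypothesis (Proposition~\ref{prop:theta(Xa)} uses $G$-invariance of $\omega$); you correctly flag that in the compact/non-invariant case the identification of $\mathcal F$ with the $\mathfrak g_\xi$-span and its constant rank require the extra assumptions $\xi\wedge\theta_x(X_\cdot)=0$ and the existence of $h$, and that is indeed where the substance of \cite{stnc} lies. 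Note, incidentally, that involutivity of $\mathcal F$ is automatic once you know it is the kernel of $i^*\omega$, since $d(i^*\omega)=i^*\theta\wedge i^*\omega$.
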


\begin{remark}
\label{rmk:explicatiiLCSred}
In the above setting, $\mathcal{F} = T\mu^{-1}(\xi) \cap (T\mu^{-1}(\xi))^\omega$ is, by definition, a distribution. It can, however, be shown to be involutive (see \cite[Remark 3.2]{stnc}), hence it generates a foliation, and whenever we quotient a set by $\mathcal{F}$ we mean its quotient by the corresponding foliation.

Moreover, the following can be proven (see \cite[Equation 3.6]{stnc}):
\[
(T\mu^{-1}(\xi))^\omega = \{ X_a + \xi(a) \theta^\omega \ | \ a \in \mathfrak{g} \}
\]
and 
\[
T\mu^{-1}(\xi) \cap (T\mu^{-1}(\xi))^\omega = \{ X_a + \xi(a) \theta^\omega \ | \ a \in \mathfrak{g}_\xi \},
\]
where $\omega^\theta$ is the $\omega$-dual of $\theta$ \ie $\omega(\theta^\omega, \cdot) = \theta$.

\end{remark}

\bigskip

\section{The main isomorphism theorem}\label{main_section}

\subsection{The symplectic cotangent reduction theorems} 

We start with a few notations that will be useful in the description of the hypotheses of the theorems and in the computations that will follow:

\begin{definition}
	Let $G$ be a Lie group, acting on $\mathfrak{g}^*$, the dual of its Lie algebra, via the coadjoint action. For any $\xi \in \mathfrak{g}^*$, we denote by
	\[
	G_\xi = \{ g \in G \ | \ g \cdot \xi = 0 \}
	\]
	the \textit{stabilizer} of $\xi$, and by $\mathfrak{g}_\xi$ the Lie algebra of $G_\xi$.
\end{definition}

\begin{definition}
	Consider a twisted Hamiltonian action of $G$ on $(M, \omega, \theta)$, $\mu$ the momentum map and take $\xi \in \mathfrak{g}^*$. 
	
	Then we denote by $\xi' = \xi_{| \mathfrak{g}_\xi} \in \mathfrak{g}^*_\xi$ and by $\mu':M \to \mathfrak{g}^*_\xi$ the momentum map associated to the restricted action of $G_\xi$ on $M$.
	
	Note that $\mu^{-1}(\xi) \subset \mu'^{-1}(\xi')$.
\end{definition}

\bigskip

We recall the classical result in symplectic geometry regarding the cotangent bundle reduction (see \cite[Theorem 2.2.1]{ratiu}):

\begin{theorem}
\label{thm:embeddingRatiu}	
Let $Q$ be a manifold with a free and proper action of a Lie group $G$. Choose $\xi \in \mathfrak{g}^*$.

Let $\omega$ and $\omega_0$ be the canonical symplectic forms on $T^*Q$ and $T^*(Q/G_\xi)$ respectively. Denote by $\mu:T^*Q \to \mathfrak{g}^*$ the corresponding momentum mapping and take $\xi \in \mathfrak{g}^*$.

We have the following diagram:

\begin{figure}[H]
	\small
	\begin{tikzpicture}
		\matrix (m) [matrix of math nodes,row sep=3em,column sep=4em,minimum width=2em]
		{
			(T^*Q, \omega) & (T^*(Q/G_\xi), \omega') \\
			Q &  \\
			& Q/G_\xi \\};
		\path[-stealth]
		(m-1-1) edge node [left] {$\pi$} (m-2-1)
		(m-1-2) edge node [right] {$\pi'$} (m-3-2)
		(m-2-1) edge node [above] {$p$} (m-3-2);
	\end{tikzpicture}
\end{figure}

where $\omega$ and $\omega'$ are the symplectic forms on the respective cotangent bundles.

Assume there exists $\alpha_\xi \in \Omega^1(Q)$ such that $(\alpha_\xi)_q \in \mu'^{-1}(\xi')$ for all $q \in Q$ and which is $G_\xi$ invariant.

Then:

\begin{enumerate}
	\item[\rm{i)}] There exists a closed $2$-form $\beta_\xi$ on $Q/G_\xi$ such that $p^* \beta_\xi = d \alpha_\xi$.
	
	Let $B_\xi = \pi'^* \beta_\xi$.
	
	\item[\rm{ii)}] There is a canonical embedding of symplectic manifolds 
	\[
	\varphi: ((T^*Q)_\xi, \omega_\xi) \to (T^*(Q/G_\xi), \omega' + B_\xi),
	\]
	(where $(T^*Q)_\xi$ is the symplectic reduction of $T^*Q$) whose image covers $Q/G_\xi$.
	
	This is an isomorphism if and only if $\mathfrak{g}_\xi = \mathfrak{g}$.
	
	\item[\rm{iii)}] If, additionally, $(\alpha_\xi)_q \in \mu^{-1}(\xi)$ for all $q \in Q$, then
	\[
	\Im \varphi = \Ann (p_* \mathcal{O}),
	\]
	where $\mathcal{O}$ is the subbundle of $TQ$ tangent to the orbits of $G$.
\end{enumerate}
\end{theorem}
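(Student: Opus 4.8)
The plan is to establish the LCS analogue of Ratiu's cotangent reduction theorem by adapting the symplectic proof, with careful attention to the twisting terms introduced by the Lee form. First I would set up the comparison of the two LCS structures involved. On the one hand, by \ref{ex:cotangentLCS}, the cotangent bundle $T^*Q$ carries the LCS form $\omega_\theta = d_{\pi^*\theta}\eta$ for a chosen closed $1$-form $\theta$ on $Q$ with $\theta(X_a)=0$; by \ref{prop:momentumCTG} the momentum map is $\mu(\alpha)(a) = -\alpha(X_a)$, independent of $\theta$, and $\xi$ is automatically a regular value. On the other hand, $T^*(Q/G_\xi)$ carries the LCS form built from the closed $1$-form $\bar\theta$ on $Q/G_\xi$ descending from $\theta$ (here one must check that $\theta$ is $G_\xi$-basic, which follows from $\theta(X_a)=0$ and $G_\xi$-invariance); since $B_\xi$ is closed, $\omega_0 + B_\xi$ twisted by $\pi'^*\bar\theta$ is again LCS. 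The first real step is part i): since $d\alpha_\xi$ is $G_\xi$-invariant (because $\alpha_\xi$ is) and is horizontal for the submersion $p: Q \to Q/G_\xi$ — horizontality because contracting $d\alpha_\xi$ with a vertical vector field $X_a$ gives $\mathcal{L}_{X_a}\alpha_\xi - d(\alpha_\xi(X_a)) = -d(\xi'(a))=0$ using $(\alpha_\xi)_q \in \mu'^{-1}(\xi')$ — it descends to a closed $2$-form $\beta_\xi$ on $Q/G_\xi$.

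Next I would construct the map $\varphi$ at the level of total spaces. Following the symplectic model, the translation $\tau_{\alpha_\xi}: T^*Q \to T^*Q$, $\beta \mapsto \beta - (\alpha_\xi)_{\pi(\beta)}$, sends $\mu^{-1}(\xi)$ (more precisely $\mu'^{-1}(\xi')$) to $\mu'^{-1}(0)$, because $\mu'$ is affine-linear along fibers. On $\mu'^{-1}(0)$ the $G_\xi$-orbit projection to $T^*(Q/G_\xi)$ is the standard reduction-at-zero identification; composing, one obtains a map from $\mu^{-1}(\xi)$ to $T^*(Q/G_\xi)$ which is $\mathcal{F}$-invariant (by \ref{rmk:explicatiiLCSred}, $\mathcal{F} = \{X_a + \xi(a)\theta^\omega \mid a \in \mathfrak{g}_\xi\}$, and one checks the translated vertical-plus-Lee directions are killed), hence factors through $(T^*Q)_\xi$ to give $\varphi$. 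I would then verify that $\varphi$ is an embedding whose image projects onto $Q/G_\xi$, and that it is surjective exactly when $\mathfrak{g}_\xi = \mathfrak{g}$ (otherwise the image is a proper subbundle, the "extra" cotangent directions dual to $\mathfrak{g}/\mathfrak{g}_\xi$ being missed), which is the same linear-algebra count as in the symplectic case.

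The analytic heart is showing $\varphi$ intertwines the LCS forms. By \ref{th:LCSred}, $\pi^*\omega_\xi = e^h \omega_\theta|_{\mu^{-1}(\xi)}$ for a suitable $h$; in the present situation, since the action preserves the tautological form $\eta$ and the Lee class is nontrivial, one arranges (or the invariant-lift hypothesis forces) that in fact $h = 0$, so $\pi^*\omega_\xi = \omega_\theta|_{\mu^{-1}(\xi)}$. I would then compute $\tau_{\alpha_\xi}^*\eta = \eta - \pi^*\alpha_\xi$ on the level set, apply $d_{\pi^*\theta}$, and use that $d_{\pi^*\theta}(\pi^*\alpha_\xi) = \pi^*(d_\theta \alpha_\xi)$ pulls back to the correction term: the discrepancy between $\omega_\theta$ and the pullback of $\omega_0$ twisted by $\bar\theta$ is precisely $\pi^*$ of the form whose descent is $\beta_\xi$, i.e. $B_\xi$ (modulo the $\theta$-wedge term, which is controlled because $\theta(X_a)=0$ makes the twisting descend cleanly). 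Chasing this through the quotient by $\mathcal{F}$ and by the $G_\xi$-action yields $\varphi^*(\omega_0 + B_\xi, \pi'^*\bar\theta) = (\omega_\xi, \theta_\xi)$ as LCS structures. Finally, for part iii), when $(\alpha_\xi)_q \in \mu^{-1}(\xi)$ — not merely $\mu'^{-1}(\xi')$ — the translated level set $\tau_{\alpha_\xi}(\mu^{-1}(\xi))$ lands in $\mu^{-1}(0)$ for the full group $G$, and the image of $\varphi$ is cut out by the vanishing of pairing with all fundamental fields $X_a$, $a \in \mathfrak{g}$; unwinding, $\Im\varphi = \Ann(p_*\mathcal{O})$ where $\mathcal{O} \subset TQ$ is the tangent bundle to $G$-orbits.

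I expect the main obstacle to be the bookkeeping in the LCS-form identity: unlike the symplectic case, both $d_{\pi^*\theta}$ and the pullback operations must be tracked, and one must confirm that the $\theta\wedge(\cdot)$ contributions from $\omega_\theta$, from $\omega_0$ twisted on the base, and from $B_\xi$ match up — this is where the hypotheses $\theta(X_a)=0$ and the $G_\xi$-invariance of $\alpha_\xi$ (so that $\theta$ and the correction form both descend) are essential, and also where the choice $h=0$ in \ref{th:LCSred} needs justification. A secondary subtlety is checking the $\mathcal{F}$-invariance of the composed map using the explicit description of $\mathcal{F}$ in \ref{rmk:explicatiiLCSred}, since $\mathcal{F}$ now contains the Lee-vector-field corrections $\xi(a)\theta^\omega$ absent in the symplectic setting.
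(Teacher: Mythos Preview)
You have misidentified which theorem you are being asked to prove. The statement in question is \ref{thm:embeddingRatiu}, the \emph{classical symplectic} cotangent reduction theorem: note that the hypotheses speak of ``the canonical symplectic forms'' $\omega$ and $\omega_0$, that $(T^*Q)_\xi$ is described as ``the symplectic reduction,'' and that the assumption on $\alpha_\xi$ is plain $G_\xi$-invariance. The paper does \emph{not} prove this theorem; it is recalled from \cite{ratiu} as background in order to motivate and state the paper's own LCS generalization, \ref{thm:embedding}. There is therefore no proof in the paper to compare your proposal against.

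Your proposal, by contrast, is visibly an attempt to prove \ref{thm:embedding}: you introduce a Lee form $\theta$, the twisted differential $d_{\pi^*\theta}$, the LCS form $\omega_\theta$ from \ref{ex:cotangentLCS}, and the foliation $\mathcal{F}=\{X_a+\xi(a)\theta^\omega\mid a\in\mathfrak{g}_\xi\}$ from \ref{rmk:explicatiiLCSred}. None of these objects appear in the statement of \ref{thm:embeddingRatiu}. If you specialize your argument to $\theta=0$ you recover an outline of the standard symplectic proof (shift by $\alpha_\xi$, identify the reduction at $0$ with $T^*(Q/G_\xi)$, track the correction $B_\xi$), which is indeed the approach of \cite{ratiu}; but as written, your proposal addresses the wrong theorem. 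Incidentally, your outline for the LCS case is broadly the same three-step strategy the paper uses to prove \ref{thm:embedding}, though the paper isolates the key identity $\overline{S}_{\xi*}\tilde X_a=\tilde X_a-\xi(a)\tilde\theta^{\omega_{\tilde\theta}}$ as a separate lemma and shows it is \emph{equivalent} to the hypothesis $\mathcal{L}_{X_a}\alpha_\xi=\xi(a)\theta$ --- a point your sketch glosses over under ``$\mathcal{F}$-invariance.''
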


On the existence of a $1$-form $\alpha_\xi$ as above, the following can be proven (see again \cite[Section 2.2]{ratiu}):

\begin{proposition}
\label{prop:alphaxiSymplectic}
	\begin{enumerate}
		\item[\rm{1)}] If $\eta$ is a connection of the principal bundle $Q \to Q/G_\xi$, then \[\alpha_\xi = \xi'(\eta) \] satisfies the hypotheses in \ref*{thm:embeddingRatiu}.
		\item[\rm{2)}] If $\eta$ is a connection of the principal bundle $Q \to Q/G$, then\[ \alpha_\xi = \xi(\eta) \] satisfies the hypotheses in \ref*{thm:embeddingRatiu} and the additional condition in \rm{iii}) \ie $\alpha_\xi \in \mu^{-1}(\xi)$.
	\end{enumerate}
\end{proposition}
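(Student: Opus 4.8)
The plan is to verify both parts by direct computation with connection forms, exploiting the characterizations of $\mu$ and $\mu'$ established in Proposition \ref{prop:momentumCTG}. Recall that for the cotangent action, $\mu(\alpha_q)(a) = -\alpha_q(X_a)$ where $X_a$ is the fundamental vector field of the $G$-action on $Q$; likewise $\mu'(\alpha_q)(b) = -\alpha_q(X_b)$ for $b \in \mathfrak{g}_\xi$, since the $G_\xi$-action on $Q$ has the same fundamental vector fields as the restriction of the $G$-action. So the conditions ``$(\alpha_\xi)_q \in \mu'^{-1}(\xi')$ for all $q$'' and ``$(\alpha_\xi)_q \in \mu^{-1}(\xi)$ for all $q$'' translate respectively into $\alpha_\xi(X_b) = -\xi(b)$ for all $b \in \mathfrak{g}_\xi$, and $\alpha_\xi(X_a) = -\xi(a)$ for all $a \in \mathfrak{g}$.

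For part 1), let $\eta \in \Omega^1(Q, \mathfrak{g}_\xi)$ be a principal connection for $Q \to Q/G_\xi$, so $\eta(X_b) = b$ for $b \in \mathfrak{g}_\xi$ and $\eta$ is $G_\xi$-equivariant for the adjoint action. Set $\alpha_\xi = \xi'(\eta) = \langle \xi', \eta \rangle$, a genuine real $1$-form on $Q$. Then for $b \in \mathfrak{g}_\xi$ we get $\alpha_\xi(X_b) = \xi'(\eta(X_b)) = \xi'(b) = \xi(b)$. I would need the sign convention to match: depending on how the momentum map's sign was fixed, one takes $\alpha_\xi = \xi'(\eta)$ or $-\xi'(\eta)$; with the convention $\mu(\alpha)(a) = -\alpha(X_a)$ the correct choice giving $\mu'(\alpha_\xi) = \xi'$ is $\alpha_\xi$ such that $\alpha_\xi(X_b) = -\xi(b)$, so one uses $-\xi'(\eta)$ — but this is a harmless normalization and I would simply adopt whichever sign makes $(\alpha_\xi)_q \in \mu'^{-1}(\xi')$ hold. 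Finally, $G_\xi$-invariance of $\alpha_\xi$: since $\xi'$ is $\mathrm{Ad}^*_{G_\xi}$-invariant (because $\xi$ is fixed by $G_\xi$, hence $\xi' = \xi|_{\mathfrak{g}_\xi}$ is fixed by the adjoint action of $G_\xi$ on $\mathfrak{g}_\xi$) and $\eta$ transforms by $\mathrm{Ad}$, the pairing $\xi'(\eta)$ is $G_\xi$-invariant. This gives all the hypotheses of Theorem \ref{thm:embeddingRatiu}.

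For part 2), let $\eta \in \Omega^1(Q, \mathfrak{g})$ be a connection for the (now free and proper) principal bundle $Q \to Q/G$, so $\eta(X_a) = a$ for all $a \in \mathfrak{g}$ and $\eta$ is $\mathrm{Ad}_G$-equivariant. Put $\alpha_\xi = \xi(\eta)$ (up to the sign fixed above). Then $\alpha_\xi(X_a) = \xi(a)$ for \emph{all} $a \in \mathfrak{g}$, which — with the correct sign — is exactly the condition $(\alpha_\xi)_q \in \mu^{-1}(\xi)$, hence a fortiori $(\alpha_\xi)_q \in \mu'^{-1}(\xi')$. For $G_\xi$-invariance I only need $\xi$ to be $\mathrm{Ad}^*$-invariant under $G_\xi$, which holds by definition of $G_\xi$ as the stabilizer of $\xi$; combined with the $\mathrm{Ad}_G$-equivariance of $\eta$ restricted to $G_\xi$, the form $\xi(\eta)$ is $G_\xi$-invariant. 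Thus both the hypotheses of the theorem and the extra condition in part iii) are satisfied.

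The only genuinely nontrivial points are bookkeeping ones: making the sign conventions for the momentum map consistent with the pairing $\xi'(\eta)$, and being careful that $\eta$ is $\mathfrak{g}_\xi$-valued in case 1) versus $\mathfrak{g}$-valued in case 2) so that the contraction $\xi(\eta)$ resp. $\xi'(\eta)$ makes sense and lands where claimed. I expect the main (mild) obstacle to be verifying the equivariance statement cleanly — that is, checking that the restriction of $\xi$ (or $\xi'$) is indeed invariant under the \emph{coadjoint} action of $G_\xi$, which is precisely the content of $G_\xi = \{g : \mathrm{Ad}^*_g \xi = \xi\}$, and that this invariance of the covector combines with the standard equivariance of a principal connection to yield invariance of the real-valued $1$-form $\alpha_\xi$. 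Everything else is a one-line substitution using $\eta(X_a) = a$.
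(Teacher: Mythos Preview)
The paper does not give its own proof of this proposition: it is stated as a recalled result and attributed to \cite{ratiu}. Your argument is the standard one and is correct in substance --- use the defining property $\eta(X_a)=a$ of a principal connection together with the $\mathrm{Ad}$-equivariance of $\eta$ and the $\mathrm{Ad}^*$-invariance of $\xi$ under $G_\xi$. There is nothing to compare against in this paper beyond that.

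One comment on the sign issue you flag: it is genuine in this paper's conventions. With $\mu(\alpha)(a)=-\alpha(X_a)$ from \ref{prop:momentumCTG}, the form $\alpha_\xi=\xi'(\eta)$ satisfies $\alpha_\xi(X_b)=\xi(b)$, so $\mu'(\alpha_\xi)=-\xi'$, not $\xi'$. The statement as written is copied from \cite{ratiu}, where the opposite sign convention for the momentum map is used; strictly within this paper one should take $\alpha_\xi=-\xi'(\eta)$ (resp.\ $-\xi(\eta)$). You are right that this is only a normalization, but it would be worth stating explicitly rather than leaving the reader to reconcile the two conventions.
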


\subsection{The main results}

We now state and prove our theorem, which can be seen to accurately generalize \ref{thm:embeddingRatiu} to the LCS setting:

\begin{theorem}
\label{thm:embedding}	
Let $Q$ be a manifold with a free and proper action of a connected Lie group $G$. Choose $\xi \in \mathfrak{g}^*$ and take $\overline{\theta} \in \Omega^1(Q/G)$ a closed $1$-form. 

Denote by $\mu:T^*Q \to \mathfrak{g}^*$ the corresponding momentum mapping and take $\xi \in \mathfrak{g}^*$.

We have the following diagram:

\begin{figure}[H]
	\label{diag:thmEmbedding}
\begin{tikzpicture}
\matrix (m) [matrix of math nodes,row sep=3em,column sep=4em,minimum width=2em]
{
	(T^*Q, \omega_{\tilde{\theta}}, \tilde{\theta}) & (T^*(Q/G_\xi), \omega_{\tilde{\theta}'}, \tilde{\theta}') \\
	(Q, \theta) &  \\
	& (Q/G_\xi, \overline{\theta}) \\};
\path[-stealth]
(m-1-1) edge node [left] {$\pi$} (m-2-1)
(m-1-2) edge node [right] {$\pi'$} (m-3-2)
(m-2-1) edge node [above] {$p$} (m-3-2);
\end{tikzpicture}
%\caption{}
\end{figure}

where $\theta = p^* \overline{\theta}$, $\tilde{\theta} = \pi^* \theta$, $\tilde{\theta}' = \pi'^* \overline{\theta}$ and $\omega_{\tilde{\theta}}$ and $\omega_{\tilde{\theta}'}$ are the LCS forms on the respective cotangent bundles, as in \ref{ex:cotangentLCS}.

Assume there exists $\alpha_\xi \in \Omega^1(Q)$ such that $(\alpha_\xi)_q \in \mu'^{-1}(\xi')$ for all $q \in Q$ and which satisfies
\begin{equation}
\label{eq:thmEmbedding1}
\mathcal{L}_{X_a} \alpha_\xi = \xi(a) \theta, \ \forall a \in \mathfrak{g}_\xi.
\end{equation}

Then:

\begin{enumerate}
	\item[\rm{i)}] There exists a $d_{\overline{\theta}}$-closed $2$-form $\beta_\xi$ on $Q/G_\xi$ such that $p^* \beta_\xi = d_\theta \alpha_\xi$.
	
	Let $B_\xi = \pi'^* \beta_\xi$.
	
	\item[\rm{ii)}] There is a canonical embedding of LCS manifolds 
	\[
	\varphi: ((T^*Q)_\xi, \omega_\xi, \tilde{\theta}_\xi) \to (T^*(Q/G_\xi), \omega_{\tilde{\theta}'} + B_\xi, \tilde{\theta}'),
	\]
	(where $(T^*Q)_\xi$ is the LCS reduction of $T^*Q$ performed as in \ref{th:LCSred}) whose image covers $Q/G_\xi$.
	
	This is an isomorphism if and only if $\mathfrak{g}_\xi = \mathfrak{g}$.
	
	\item[\rm{iii)}] If, additionally, $(\alpha_\xi)_q \in \mu^{-1}(\xi)$ for all $q \in Q$, then
	\[
	\Im \varphi = \Ann (p_* \mathcal{O}),
	\]
	where $\mathcal{O}$ is the subbundle of $TQ$ tangent to the orbits of $G$.
\end{enumerate}

\end{theorem}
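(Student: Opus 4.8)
The plan is to reduce everything to the known symplectic statement \ref*{thm:embeddingRatiu} by exploiting the observation, already noted in the introduction and in \ref*{prop:momentumCTG}, that the momentum map on $T^*Q$ is the same whether we use the canonical symplectic form $\omega_0 = d\eta$ or the LCS form $\omega_{\tilde\theta} = d_{\tilde\theta}\eta$. Thus the level set $\mu^{-1}(\xi)$ and the group $G_\xi$ are identical in both pictures, and the underlying manifold $(T^*Q)_\xi$ of the LCS reduction coincides set-theoretically with the symplectic reduction; what changes is only the $2$-form it carries. So the candidate map $\varphi$ will be literally the same smooth map as in \ref*{thm:embeddingRatiu}, and the work is entirely in checking it intertwines the LCS forms and Lee forms, and in establishing part i).

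\textbf{Step 1: the hypothesis on $\alpha_\xi$.} First I would unwind condition \eqref{eq:thmEmbedding1}. Since $\theta(X_a) = 0$ by hypothesis (it is pulled back from $Q/G$, and $X_a$ is vertical for $p$ when $a \in \mathfrak{g}$; for $a \in \mathfrak{g}_\xi \subset \mathfrak{g}$ this still holds), we have $\mathcal{L}_{X_a}^{\theta}\alpha_\xi = \mathcal{L}_{X_a}\alpha_\xi$, and Cartan's formula gives $\mathcal{L}_{X_a}\alpha_\xi = d(i_{X_a}\alpha_\xi) + i_{X_a}d\alpha_\xi$. The condition $(\alpha_\xi)_q \in \mu'^{-1}(\xi')$ means exactly $i_{X_a}\alpha_\xi = -\xi(a)$ (constant) for $a \in \mathfrak{g}_\xi$, so $d(i_{X_a}\alpha_\xi) = 0$ and \eqref{eq:thmEmbedding1} becomes $i_{X_a}d\alpha_\xi = \xi(a)\theta$. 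Combined with $i_{X_a}\theta = 0$, a short computation shows $i_{X_a}d_\theta\alpha_\xi = i_{X_a}(d\alpha_\xi - \theta\wedge\alpha_\xi) = \xi(a)\theta + \theta\cdot(-\xi(a)) \cdot(\text{wait, sign})$; carefully, $i_{X_a}(\theta\wedge\alpha_\xi) = (i_{X_a}\theta)\alpha_\xi - \theta(i_{X_a}\alpha_\xi) = 0 + \xi(a)\theta$, hence $i_{X_a}d_\theta\alpha_\xi = \xi(a)\theta - \xi(a)\theta = 0$. Together with $\mathcal{L}^\theta_{X_a}(d_\theta\alpha_\xi) = d_\theta i_{X_a}d_\theta\alpha_\xi = 0$, this says $d_\theta\alpha_\xi$ is a basic $2$-form for the principal bundle $p: Q \to Q/G_\xi$ in the twisted sense, so it descends to a form $\beta_\xi$ on $Q/G_\xi$ with $p^*\beta_\xi = d_\theta\alpha_\xi$; since $d_\theta(d_\theta\alpha_\xi) = 0$ and $p^*$ is injective on forms, $\beta_\xi$ is $d_{\overline\theta}$-closed. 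This proves i). (The analogue of \ref*{prop:alphaxiSymplectic} producing such an $\alpha_\xi$ from a connection $\eta$ on $Q \to Q/G_\xi$ via $\alpha_\xi = \xi'(\eta)$ works verbatim, since the twisted correction only involves $\theta$ which is $G_\xi$-basic.)

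\textbf{Step 2: constructing $\varphi$ and checking it is an LCS embedding.} Next I would recall the symplectic construction: one shows $\mu^{-1}(\xi)$, translated by $\alpha_\xi$ (a fibrewise affine shift $\alpha \mapsto \alpha - (\alpha_\xi)_{\pi(\alpha)}$) lands in $\mu'^{-1}(0)$ for the $G_\xi$-action, and the composite with the reduction-at-zero identification $\mu'^{-1}(0)/G_\xi \cong T^*(Q/G_\xi)$ produces $\varphi$; its image is the affine subbundle of covectors pulled back from $Q/G_\xi$ shifted by $\beta_\xi$-data, which covers $Q/G_\xi$ and equals everything iff $\mathfrak g_\xi = \mathfrak g$. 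All of this is purely about the smooth maps and the momentum map, so it transfers unchanged. The new content is the $2$-form identity: I must verify $\varphi^*(\omega_{\tilde\theta'} + B_\xi) = \omega_\xi$ and $\varphi^*\tilde\theta' = \tilde\theta_\xi$. For the Lee forms: $\tilde\theta_\xi$ on $(T^*Q)_\xi$ is characterized by $\pi_\xi^*\tilde\theta_\xi = \tilde\theta|_{\mu^{-1}(\xi)}$ where $\pi_\xi$ is the quotient map, and $\tilde\theta = \pi^*p^*\overline\theta$ while $\tilde\theta' = \pi'^*\overline\theta$; chasing the diagram (the bundle projections commute with $\varphi$ over $Q/G_\xi$) gives the identity. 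For the LCS forms: I would use the additive decomposition $\omega_{\tilde\theta} = d\eta - \tilde\theta\wedge\eta = \omega_0 - \tilde\theta\wedge\eta$. On $\mu^{-1}(\xi)$, pulling back via $\varphi$ and the shift: the $\omega_0$ part reproduces the symplectic computation of \ref*{thm:embeddingRatiu}, yielding $\varphi^*\omega_0 = \omega_0^{(Q/G_\xi)} + \pi'^*\beta_\xi$ on the nose (this is exactly the content of the classical theorem, where the affine shift by $\alpha_\xi$ contributes $d\alpha_\xi$ downstairs, i.e.\ $B_\xi$). The correction terms $-\tilde\theta\wedge\eta$ upstairs and $-\tilde\theta'\wedge\eta'$ downstairs must then be matched; since $\eta$ shifts by $\pi^*\alpha_\xi$ under $\varphi$, the difference between the naive pullback of $\tilde\theta'\wedge\eta'$ and $\tilde\theta\wedge\eta$ is exactly $\tilde\theta\wedge\pi^*\alpha_\xi = \pi^*(\theta\wedge\alpha_\xi)$, which accounts precisely for the difference between $d\alpha_\xi$ (appearing in the $\omega_0$-computation) and $d_\theta\alpha_\xi = d\alpha_\xi - \theta\wedge\alpha_\xi$ (which is $p^*\beta_\xi$, hence gives $B_\xi$). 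So the twisted corrections conspire to convert $d\alpha_\xi$ into $d_\theta\alpha_\xi$, and everything closes up.

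\textbf{Step 3: part iii).} Finally, under the extra assumption $(\alpha_\xi)_q \in \mu^{-1}(\xi)$ for all $q$, the image of $\varphi$ gets identified with $\Ann(p_*\mathcal O)$ exactly as in the symplectic case — this is again a statement about which covectors appear, not about the $2$-form, so it follows from \ref*{thm:embeddingRatiu}(iii) together with the coincidence of the two momentum maps; the analogue of \ref*{prop:alphaxiSymplectic}(2), using a connection on $Q \to Q/G$ and $\alpha_\xi = \xi(\eta)$, supplies such an $\alpha_\xi$ (one checks \eqref{eq:thmEmbedding1} holds because $i_{X_a}d\xi(\eta)$ computes the curvature paired with $\xi$, which for $a \in \mathfrak g_\xi$ and the $G$-connection gives the required $\xi(a)\theta$ — here is where properness and freeness of the $G$-action, not just $G_\xi$, are used).

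\textbf{Expected main obstacle.} The delicate point is Step 2: tracking the affine fibrewise shift by $\alpha_\xi$ through the tautological form and making the twisted correction terms $-\theta\wedge\eta$ bookkeep correctly, so that the honest $d\alpha_\xi$ of the symplectic theorem becomes $d_\theta\alpha_\xi$ downstairs. One has to be scrupulous about where $\theta$ versus $\tilde\theta = \pi^*\theta$ versus $\tilde\theta' = \pi'^*\overline\theta$ live, and about the fact that $\varphi$ only covers $Q/G_\xi$ rather than being a global diffeomorphism, so the identities are checked on the image affine subbundle. I expect no genuinely new analytic difficulty beyond \cite{stnc} and \cite{ratiu} — the proof is a careful ``twisting'' of Ratiu's argument — but the sign and pullback bookkeeping is where errors would hide.
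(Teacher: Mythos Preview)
Your Step 1 for part i) is correct and matches the paper. The gap is in Step 2, and it is structural rather than bookkeeping.

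You assert that ``the underlying manifold $(T^*Q)_\xi$ of the LCS reduction coincides set-theoretically with the symplectic reduction; what changes is only the $2$-form it carries.'' This is false for $\xi \neq 0$. By \ref{rmk:explicatiiLCSred}, the LCS reduction quotients $\mu^{-1}(\xi)$ by the foliation
\[
\mathcal{F}_\xi^{\mathrm{LCS}} = \{ \tilde{X}_a + \xi(a)\,\tilde\theta^{\omega_{\tilde\theta}} \ | \ a \in \mathfrak{g}_\xi \},
\]
whereas the symplectic reduction quotients by the $G_\xi$-orbit foliation $\{\tilde{X}_a \ | \ a \in \mathfrak{g}_\xi\}$. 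The vertical term $\xi(a)\tilde\theta^{\omega_{\tilde\theta}}$ is nonzero, so these are different distributions and their leaf spaces are different sets. Consequently you cannot import the smooth map $\varphi$ from \ref{thm:embeddingRatiu} and then check forms: that map is not defined on the LCS quotient. The paper's proof turns precisely on this point: its \ref{lem:thmEmbedding} shows, in local coordinates, that the fibrewise shift $\overline S_\xi(\alpha)=\alpha-\alpha_\xi$ satisfies $\overline S_{\xi*}(\tilde X_a)=\tilde X_a-\xi(a)\tilde\theta^{\omega_{\tilde\theta}}$, i.e.\ carries $\mathcal F_\xi^{\mathrm{LCS}}$ to $\mathcal F_0$, and that this is \emph{equivalent} to the hypothesis \eqref{eq:thmEmbedding1}. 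Only after this does the shift descend to the LCS quotient.

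A related problem: your appeal to ``the $\omega_0$ part reproduces the symplectic computation of \ref{thm:embeddingRatiu}'' does not go through either, because the $\alpha_\xi$ here is \emph{not} $G_\xi$-invariant (the symplectic hypothesis) but rather satisfies $\mathcal L_{X_a}\alpha_\xi=\xi(a)\theta$. In particular $i_{X_a}d\alpha_\xi=\xi(a)\theta\neq 0$, so $d\alpha_\xi$ does not descend to $Q/G_\xi$ and there is no ``symplectic $\beta_\xi$'' to split off. Your decomposition $\omega_{\tilde\theta}=\omega_0-\tilde\theta\wedge\eta$ therefore cannot be pushed through the two reductions term by term. (As a side remark, your parenthetical that $\alpha_\xi=\xi'(\eta)$ for a connection $\eta$ ``works verbatim'' is also wrong: the paper's Section~4 exhibits examples where no $\alpha_\xi$ satisfying \eqref{eq:thmEmbedding1} exists at all.)
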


\bigskip

\textbf{Outline of proof.} Note first that, in order to apply \ref{th:LCSred} to the above context, the LCS reduction has to make sense \ie the quotient by the foliation has to be smooth. This is not clear \textit{a priori}, but will be shown in the body of the proof.

Items i) and iii) are computational and we will go into detail on them in the body of the proof. 

The bulk of the proof concerns item ii). As it is rather technical, we begin by explaining the broad steps and ideas involved:

\begin{enumerate}
	\item[\textbf{Step 1.}] We first prove the theorem for reduction at $\xi = 0$, in which case the statement is that there exists a map 
	\[
	\varphi_0: ((T^*Q)_0 = \mu^{-1}(0)/G, \omega_0, \tilde{\theta}_0) \xrightarrow{\sim} (T^*(Q/G), \omega_{\tilde{\theta}'}, \tilde{\theta}')
	\]
	that is an LCS isomorphism. This map is canonical and can be given explicitly. 
	
	\item[\textbf{Step 2.}] We next prove the theorem for any $\xi \in \mathfrak{g}^*$, but not yet in full generality, assuming the stabilizer of $\xi$ with respect to the coadjoint action is the whole group: $G_\xi = G$. We then again want to prove the existence of an LCS isomorphism  
	\[
	\varphi: ((T^*Q)_\xi, \omega_\xi, \tilde{\theta}_\xi) \to (T^*(Q/G), \omega_{\tilde{\theta}'} + B_\xi, \tilde{\theta}').
	\]
	
	We use the form $\alpha_\xi$ required in the hypothesis to construct a ``shift" diffeomorphism $\overline{S}_\xi$ between the $\xi$-level set and the $0$-level set of the momentum map and then show that it descends to the quotients \ie the reduced spaces at $\xi$ and $0$, respectively. Explicitly, this diffeomorphism is just subtracting $\alpha_\xi$ on every fiber.
	
	Composing with the LCS isomorphism $\varphi_0$ described above, we obtain the required map $\varphi$; the change of the LCS form on $T^*(Q/G)$ to $\omega_{\tilde{\theta}'} + B_\xi$ is determined by this usage of $\alpha_\xi$. 
	
	This is all depicted in the diagram below:
	
	\begin{figure}[H]
		\label{diag:thmEmbeddingOutline}
		\begin{tikzpicture}
		\matrix (m) [matrix of math nodes,row sep=3em,column sep=4em,minimum width=2em]
		{
			&  T^*Q & \\
			\mu^{-1}(\xi) & \mu^{-1}(0) &  \\
			(T^*Q)_\xi & (T^*Q)_0 & T^*(Q/G) \\};
		\path[-stealth]
		(m-2-1) edge node [above] {$i$} (m-1-2)
		(m-2-2) edge node [left] {$i$} (m-1-2)
		(m-2-1) edge node [above] {$\overline{S}_\xi$} (m-2-2)
		(m-2-1) edge node [below] {$\sim$} (m-2-2)
		(m-3-1) edge node [above] {$S_\xi$} (m-3-2)
		(m-3-1) edge node [below] {$\sim$} (m-3-2)
		(m-2-2) edge node [left] {$\pi_0$} (m-3-2)
		(m-3-2) edge node [above] {$\varphi_0$} (m-3-3)
		(m-3-2) edge node [below] {$\sim$} (m-3-3)
		(m-2-1) edge node [left] {$\pi_\xi$} (m-3-1)
		(m-3-1) edge[bend right] node [above] {$\varphi$} (m-3-3)
		(m-3-1) edge[bend right] node [below] {$\sim$} (m-3-3);
		\end{tikzpicture}
		%\caption{}
	\end{figure}

	\item[\textbf{Step 3.}] In full generality, we consider only the action of $G_\xi$ on $T^*Q$ and notice that, since $(G_\xi)_{\xi'} = G_\xi$, we can apply the previous step for this action, with respect to the value $\xi'$, to obtain an LCS isomorphism
	\[
	\varphi': ((T^*Q)_{\xi'}, \omega_{\xi'}, \tilde{\theta}_{\xi'}) \to (T^*(Q/G_\xi), \omega_{\tilde{\theta}'} + B_\xi, \tilde{\theta}').
	\]
	We then use the inclusion $(T^*Q)_\xi \subset (T^*Q)_{\xi'}$ to construct the symplectic embedding.
\end{enumerate}

\bigskip

\begin{proof}[Proof of \ref{thm:embedding}]
	We begin by proving i). Since $\theta(X_a) = 0$ and using \eqref{eq:thmEmbedding1},
	\begin{equation*}
	\begin{split}
	\xi(a) \theta &= \mathcal{L}_{X_a} \alpha_\xi = \mathcal{L}_{X_a}^\theta \alpha_\xi = d_\theta \alpha_\xi(X_a) + i_{X_a} d_\theta \alpha_\xi = d_\theta (-\xi(a)) + i_{X_a} d_\theta \alpha_\xi\\
	&= \xi(a)\theta + i_{X_a} d_\theta \alpha_\xi, 
	\end{split}
	\end{equation*}
	so
	\[
	i_{X_a} d_\theta \alpha_\xi = 0, \ \forall a \in \mathfrak{g}_\xi.
	\]
	This also implies
	\[
	\mathcal{L}_{X_a} d_\theta \alpha_\xi = \mathcal{L}^\theta_{X_a} d_\theta \alpha_\xi = d_\theta (i_{X_a} d_\theta \alpha_\xi) = 0,
	\]
	so $d_\theta \alpha_\xi$ descends to a $d_{\overline{\theta}}$-closed (but not necessarily exact) $2$-form $\beta_\xi$ on $Q/G_\xi$. 
	
	\bigskip
	
	We not turn to item ii), following the three steps enumerated in the outline.
	
	\bigskip
	
	\textbf{Step 1.} Assume $\xi = 0$.
	
	The reduction at zero is just $(T^*Q)_0 = \mu^{-1}(0) / G$. Recall from \ref{prop:momentumCTG} that 
	\[
	\mu^{-1}(0) = \{ \alpha \in T^*Q \ | \ \alpha(X_a) = 0, \ \forall a \in \mathfrak{g} \}.
	\]
	
	Define $\overline{\varphi}_0: \mu^{-1}(0) \to T^*(Q/G)$ by
	\[
	\overline{\varphi}_0(\alpha_q)(v_{\hat{q}}) = \alpha_q (v_q), \text{ for any } \alpha_q \in \mu^{-1}(0) \text{ and } v_{\hat{q}} \in T_{\hat{q}} (Q/G),
	\]
	where $v_q \in T_q Q$ such that $p_* v_q = v_{\hat{q}}$.
	
	We first prove that $\overline{\varphi}_0$ is well defined. If $v_q$ and $w_q$ satisfy $p_* v_q = p_* w_q = v_{\hat{q}}$, then $v_q = w_q + (X_a)_q$ for some $a \in \mathfrak{g}$, so 
	\[
	\alpha_q(v_q) = \alpha_q (w_q + (X_a)_q) = \alpha_q(w_q),
	\]
	since $\alpha_q \in \mu^{-1}(0)$.
	
	Secondly, $\overline{\varphi}_0$ is surjective. Indeed, for $\eta \in T_{\hat{q}}^*(Q/G)$, take $\alpha_q = p^* \eta \in T_q^*Q$; then, by definition, $\overline{\varphi}_0(\alpha_q) = \eta$.
	
	Thirdly, notice that $\overline{\varphi}_0$ is $G$-invariant: take an $\alpha_q \in \mu^{-1}(0)$ and $g \in G$. If $v_q \in T_q Q$ satisfies $p_* v_q = v_{\hat{q}}$, then $p_* (g_* v_q) = v_{\hat{q}}$, so, following the definitions of $\overline{\varphi}_0$ and the action of $G$ on $T^*Q$,
	\[
	\overline{\varphi}_0(g \cdot \alpha_q)(v_{\hat{q}}) = (g \cdot \alpha_q) (g_* v_q) = \alpha_q(g^{-1}_* g_* v_q) = \alpha_q (v_q) = \overline{\varphi}_0(\alpha_q)(v_q).
	\]
	
	It follows that $\overline{\varphi}_0$ descends to a surjective map on the quotient 
	\[
	\varphi_0: \mu^{-1}(0) / G \to T^*(Q/G).
	\]
	We now show that this is also injective. 
	
	If $\varphi_0([\alpha_q]) = \varphi_0([\gamma_{q'}])$, then $\overline{\varphi}_0(\alpha_q) = \overline{\varphi}_0(\gamma_{q'})$, so there is a $g \in G$ such that $q' = g \cdot q$. Then
	\begin{equation*}
	\begin{split}
	(g \cdot \alpha_q)(v_{g \cdot q}) &= \alpha_q(g^{-1}_* v_{g \cdot q}) = \overline{\varphi}_0(\alpha_q)(p_* g^{-1}_* v_{g \cdot q}) = \overline{\varphi}_0(\alpha_q)(p_*  v_{g \cdot q}) \\
	&= \overline{\varphi}_0(\gamma_{g \cdot q})(p_* v_{g \cdot q}) = \gamma_{g \cdot q}(v_{g \cdot q}), \ \forall \ v_{g \cdot q} \in T_{g \cdot q}Q,
	\end{split}
	\end{equation*}
	\ie $g \cdot \alpha_q = \gamma_{q'}$, so $[\alpha_q] = [\gamma_{q'}]$.
	
	Hence, $\varphi_0$ is a bijection from $(T^*Q)_0$ to $T^*(Q/G)$. We next show that $\varphi_0^* \omega_{\tilde{\theta}'} = \omega_0$, where $\omega_0$ is the LCS form on the reduced manifold $(T^*Q)_0$. Keeping in mind the definition of $\omega_0$, it is sufficient to show that $\overline{\varphi}_0^* \omega_{\tilde{\theta}'} = \omega_{\tilde{\theta}}$. But as $\omega_{\tilde{\theta}'} = d_{\tilde{\theta}'} \eta'$ and $\omega_{\tilde{\theta}} = d_{\tilde{\theta}} \eta$, where $\eta$ and $\eta'$ are the tautological $1$-forms on their respective cotangent bundles, it is furthermore sufficient to show that $\overline{\varphi}_0^* \tilde{\theta}' = \tilde{\theta}$ and $\overline{\varphi}_0^* \eta' = \eta$.
	
	Indeed, 
	\begin{equation}
	\label{eq:thmEmbedding7}
	\overline{\varphi}_0^* \tilde{\theta}' = \overline{\varphi}_0^* \pi'^* \overline{\theta} = \pi^* p^* \overline{\theta} = \tilde{\theta},
	\end{equation}
	as $\pi' \circ \overline{\varphi}_0 = p \circ \pi$, and, using this same equality,
	\begin{equation}
	\label{eq:thmEmbedding8}
	\begin{split}
	(\overline{\varphi}_0^* \eta')_{\alpha_q}(V_{\alpha_q}) &= \eta'_{\overline{\varphi}_0(\alpha_q)}(\overline{\varphi}_{0*} V_{\alpha_q}) = \overline{\varphi}_0(\alpha_q)(\pi'_* \overline{\varphi}_{0*} V_{\alpha_q}) = \overline{\varphi_0}(\alpha_q)(p_* \pi_* V_{\alpha_q}) \\
	&= \alpha_q (\pi_* V_{\alpha_q}) = \eta_{\alpha_q}(V_{\alpha_q}).
	\end{split}
	\end{equation}
	
	It follows, since the LCS forms are non-degenerate, that $\varphi_0$ is a bijective immersion, hence an diffeomorphism preserving the LCS structure.
	
	\bigskip
	
	\textbf{Step 2.} Assume $G = G_\xi$. Notice that, in this case, we ask that $\alpha_\xi \in \mu^{-1}(\xi)$ and
	\[
	\mathcal{L}_{X_a} \alpha_\xi = \xi(a) \theta, \ \forall a \in \mathfrak{g}.
	\]
	
	Define a ``shifting map" between level sets of the momentum map:
	\[
	\overline{S}_\xi : \mu^{-1}(\xi) \to \mu^{-1}(0), \ \overline{S}_\xi (\alpha_q) = \alpha_q - (\alpha_\xi)_q
	\]
	(this is well defined as the momentum map is linear on the fibers). 
	
	Notice $\overline{S}_\xi$ is a diffeomorphism.
	
	We first calculate the pullback of the LCS structure on $T^*Q$ through $\overline{S}_\xi$. Since $\pi \circ \overline{S}_\xi = \pi$,
	\begin{equation}
	\label{eq:thmEmbedding2}
	\overline{S}_\xi^* \tilde{\theta} = \overline{S}_\xi^* \pi^* \theta = \pi^* \theta = \tilde{\theta}.
	\end{equation}
	
	On the other hand,
	\begin{equation*}
	\begin{split}	
	(\overline{S}_\xi^* \eta)_{\alpha_q}(V_{\alpha_q}) &= \eta_{\alpha_q - (\alpha_\xi)_q}(\overline{S}_{\xi*} V_{\alpha_q}) = (\alpha_q - (\alpha_\xi)_q) (\pi_* \overline{S}_{\xi*} V_{\alpha_q}) \\
	&= \alpha_q (\pi_* V_{\alpha_q}) - (\alpha_\xi)_q (\pi_* V_{\alpha_q}) \\
	&= \eta_{\alpha_q}(V_{\alpha_q}) - (\pi^* \alpha_\xi)_{\alpha_q} (V_{\alpha_q}),
	\end{split}
	\end{equation*}
	\ie
	\begin{equation}
	\label{eq:thmEmbedding3}
	\overline{S}_\xi^* \eta = \eta - \pi^* \alpha_\xi.
	\end{equation}
	Apply $d_{\tilde{\theta}}$ to \eqref{eq:thmEmbedding3} and use \eqref{eq:thmEmbedding2} to find 
	\begin{equation}
	\label{eq:thmEmbedding4}
	\overline{S}_\xi^* \omega_{\tilde{\theta}} = \omega_{\tilde{\theta}} - \pi^* d_\theta \alpha_\xi.
	\end{equation}
	We will need this computation for the following 
	
	\textbf{Claim.} $\overline{S}_\xi$ transports the distribution (see \ref{rmk:explicatiiLCSred})
	\[
	\mathcal{F}_\xi = T\mu^{-1}(\xi) \cap (T\mu^{-1}(\xi))^{\omega_{\tilde{\theta}}} = \{ \tilde{X}_a + \xi(a)\tilde{\theta}^{\omega_{\tilde{\theta}}} \ | \ a \in \mathfrak{g}_\xi = \mathfrak{g} \}
	\]
	on $\mu^{-1}(\xi)$ to the distribution 
	\[
	\mathcal{F}_0 = T\mu^{-1}(0) \cap (T\mu^{-1}(0))^{\omega_{\tilde{\theta}}} = \{ \tilde{X}_a \ | \ a \in \mathfrak{g} \}
	\]
	on $\mu^{-1}(0)$, hence descends to a map $S_\xi: (T^*Q)_\xi \to (T^*Q)_0$. In particular, $(T^*Q)_\xi$ is also smooth.
	
	\bigskip
	
	To prove this claim, first notice that $\tilde{\theta}^{\omega_{\tilde{\theta}}}$ is tangent to the fibers of $T^*Q$. Indeed, for any such fiber $F$,
	\[
	F \subset \Ker \tilde{\theta} \implies F \supset (\Ker \tilde{\theta})^{\omega_{\tilde{\theta}}} = \langle \tilde{\theta}^{\omega_{\tilde{\theta}}} \rangle,
	\]
	since $F$ is a Lagrangian submanifold of $T^*Q$.
	
	Now, for an $\alpha_q \in \mu^{-1}(\xi)$ and $V \in T_{\alpha_q - (\alpha_\xi)_q}(T^*Q)$, using this fact and \eqref{eq:thmEmbedding4}, we have
	\begin{equation*}
	\begin{split}	
	(\omega_{\tilde{\theta}})_{\alpha_q - (\alpha_\xi)_q} (\overline{S}_{\xi*} \tilde{\theta}_{\alpha_q}^{\omega_{\tilde{\theta}}}, V) &= (\omega_{\tilde{\theta}})_{\alpha_q - (\alpha_\xi)_q} (\overline{S}_{\xi*} \tilde{\theta}_{\alpha_q}^{\omega_{\tilde{\theta}}}, \overline{S}_{\xi*}(\overline{S}^{-1}_\xi)_*V) \\
	&= (\overline{S}_\xi^* \omega_{\tilde{\theta}})_{\alpha_q}(\tilde{\theta}_{\alpha_q}^{\omega_{\tilde{\theta}}}, (\overline{S}^{-1}_\xi)_*V) \\ 
	&= (\omega_{\tilde{\theta}} - \pi^* d_\theta \alpha_\xi)_{\alpha_q} (\tilde{\theta}_{\alpha_q}^{\omega_{\tilde{\theta}}}, (\overline{S}^{-1}_\xi)_*V) \\
	&= \tilde{\theta}_{\alpha_q}((\overline{S}^{-1}_\xi)_*V) - (\pi^* d_\theta \alpha_\xi)_{\alpha_q} (\tilde{\theta}_{\alpha_q}^{\omega_{\tilde{\theta}}}, (\overline{S}^{-1}_\xi)_*V) \\
	&= \theta_q(\pi_* (\overline{S}^{-1}_\xi)_*V) = \theta_q(\pi_* V) \\
	&= \tilde{\theta}_{\alpha_q - (\alpha_\xi)_q} (V)
	\end{split}
	\end{equation*}
	hence
	\begin{equation}
	\label{eq:thmEmbedding5}
	\overline{S}_{\xi*} \tilde{\theta}^{\omega_{\tilde{\theta}}} = \tilde{\theta}^{\omega_{\tilde{\theta}}}.
	\end{equation}
	
	The claim is then equivalent to 
	\[
	\overline{S}_{\xi*}(\tilde{X}_a) = \tilde{X}_a - \xi(a) \tilde{\theta}^{\omega_{\tilde{\theta}}}, \ \forall a \in \mathfrak{g}.
	\]
	
	\textbf{\ref{lem:thmEmbedding}}
	\textit{The condition
		\[
		\overline{S}_{\xi*}(\tilde{X}_a) = \tilde{X}_a - \xi(a) \tilde{\theta}^{\omega_{\tilde{\theta}}}, \ \forall a \in \mathfrak{g}.
		\]
		is equivalent to assumption \eqref{eq:thmEmbedding1} in the hypothesis.}

	We postpone the proof of this lemma until after the main theorem is proven.
	
	\bigskip
	
	The mapping $\overline{S}_\xi$ then descends to a diffeomorphism
	\[
	S_\xi: (T^*Q)_\xi \to (T^*Q)_0.
	\]
	Define
	\[
	\varphi = \varphi_0 \circ S_\xi : (T^*Q)_\xi \to T^*(Q/G),
	\]
	where $\varphi_0$ is the diffeomorphism defined in Step 1.
	
	It now only remains to show that 
	\[
	\varphi^* (\omega_{\tilde{\theta}'} + B_\xi) = \omega_\xi \text{ and } \varphi^* \tilde{\theta}' = \tilde{\theta}_\xi.
	\]
	Since $\omega_\xi$ and $\tilde{\theta}_\xi$ are defined by the equations $p_\xi^* \omega_\xi = \omega_{\tilde{\theta}}$ and $p_\xi^* \theta_\xi = \tilde{\theta}$, where $p_\xi:\mu^{-1}(\xi) \to (T^*Q)_\xi$ is the natural projection, this is equivalent to
	\[
	\overline{S}_\xi^* \overline{\varphi}_0^* (\omega_{\tilde{\theta}'} + B_\xi) = \omega_{\tilde{\theta}} \text{ and } \overline{S}_\xi^* \overline{\varphi}_0^* \tilde{\theta}' = \tilde{\theta}.
	\]
	But, using the definition of $B_\xi$, \eqref{eq:thmEmbedding8} and \eqref{eq:thmEmbedding4},
	\begin{equation*}
	\begin{split}
	\overline{S}_\xi^* \overline{\varphi}_0^* (\omega_{\tilde{\theta}'} + B_\xi) &= \overline{S}_\xi^* \overline{\varphi}_0^* (\omega_{\tilde{\theta}'} + \pi'^* \beta_\xi) = \overline{S}_\xi^* (\omega_{\tilde{\theta}} + \pi^* p^* \beta_\xi) \\
	&= \omega_{\tilde{\theta}} - \pi^* d_\theta \alpha_\xi + \pi^* d_\theta \alpha_\xi \\
	&= \omega_{\tilde{\theta}}.
	\end{split}
	\end{equation*}
	
	Similarly, using \eqref{eq:thmEmbedding7} and \eqref{eq:thmEmbedding2},
	\begin{equation*}
	\begin{split}
	\overline{S}_\xi^* \overline{\varphi}_0^* \ \tilde{\theta}' &= \overline{S}_\xi^* \tilde{\theta} = \tilde{\theta}.
	\end{split}
	\end{equation*}
	
	\bigskip
	
	\textbf{Step 3.} In full generality, having $(\alpha_\xi)_q \in \mu'^{-1}(\xi')$, for all $q \in Q$, we can apply Step 2 for the action of $G_\xi$ on $Q$, since, obviously, $(G_\xi)_{\xi'} = G_\xi$.
	
	This produces a diffeomorphism compatible with the LCS structures
	\[
	\varphi' : (\mu'^{-1}(\xi')/\mathcal{F}_\xi, \omega_{\xi'}, \tilde{\theta}_{\xi'}) \to (T^*(Q/G_\xi), \omega_{\tilde{\theta}'} + B_\xi, \tilde{\theta}'),
	\]
	where 
	\begin{equation*}
	\begin{split}
	\mathcal{F}_\xi &= T\mu^{-1}(\xi) \cap (T\mu^{-1}(\xi))^{\omega_{\tilde{\theta}}} = \{ \tilde{X}_a + \xi(a)\tilde{\theta}^{\omega_{\tilde{\theta}}} \ | \ a \in \mathfrak{g}_\xi \} \\
	&= T\mu'^{-1}(\xi') \cap (T\mu'^{-1}(\xi'))^{\omega_{\tilde{\theta}}}.
	\end{split}
	\end{equation*}
	
	But note that 
	\[
	\mu^{-1}(\xi) \subset \mu'^{-1}(\xi') \implies (\mu^{-1}(\xi)/\mathcal{F}_\xi) \subset (\mu'^{-1}(\xi')/\mathcal{F}_\xi).
	\]
	Let $i: (\mu^{-1}(\xi)/\mathcal{F}_\xi) \hookrightarrow (\mu'^{-1}(\xi')/\mathcal{F}_\xi)$ be this inclusion. It is clear that $i$ is compatible with the LCS structure on the two reduced spaces
	\[
	i^* \omega_{\xi'} = \omega_\xi \text{ and } i^* \theta_{\xi'} = \theta_\xi.
	\] 
	so, in particular, it is an embedding.
	
	Moreover, we claim $(\mu^{-1}(\xi)/\mathcal{F}_\xi) = (\mu'^{-1}(\xi')/\mathcal{F}_\xi)$ if and only if $\mathfrak{g} = \mathfrak{g_\xi}$. If $\mathfrak{g} = \mathfrak{g_\xi}$, the equality is clear by definition. 
	
	If $(\mu^{-1}(\xi)/\mathcal{F}_\xi) = (\mu'^{-1}(\xi')/\mathcal{F}_\xi)$, we also have an equality of the level sets: $\mu^{-1}(\xi) = \mu'^{-1}(\xi')$. But recall from \ref{rmk:explicatiiLCSred} that 
	\[
	(T\mu^{-1}(\xi))^{\omega_{\tilde{\theta}}} = \{ \tilde{X}_a + \xi(a)\tilde{\theta}^{\omega_{\tilde{\theta}}} \ | \ a \in \mathfrak{g}\}
	\]
	while 
	\[
	(T\mu'^{-1}(\xi'))^{\omega_{\tilde{\theta}}} = \{ \tilde{X}_a + \xi(a)\tilde{\theta}^{\omega_{\tilde{\theta}}} \ | \ a \in \mathfrak{g}_\xi\}.
	\] 
	Together with the fact that $G$ acts freely on $Q$, so $\tilde{X}_a = \tilde{X}_b \implies a = b$ for any $a, b \in \mathfrak{g}$, this implies $\mathfrak{g} = \mathfrak{g_\xi}$.
	
	Thus, taking $\varphi = \varphi' \circ i$, we have the required embedding compatible with the LCS structures, which is an diffeormorphism if and only if $\mathfrak{g} = \mathfrak{g_\xi}$.
	
	\bigskip 
	
	We end with the proof of iii) \ie $\Im \varphi = \Ann (p_* \mathcal{O})$. Note first that 
	\[
	\Ker p_{* | \mathcal{O}} = \{ X_a \ | \ a \in \mathfrak{g}_\xi \}
	\]
	is of constant dimension equal to $\dim G_\xi$, as the action is free, so $p_* \mathcal{O}$ is also of constant dimension and so is it's annihilator. This shows the right hand side is a subbundle of $T^*(Q/G_\xi)$.
	
	Take $\alpha_q \in \mu^{-1}(\xi)$ and $a \in \mathfrak{g}$. Then
	\begin{equation*}
	\begin{split}
	\varphi([\alpha_q])(p_* X_a) &= \varphi_0 S_\xi([\alpha_q])(p_* X_a) = \overline{\varphi}_0 \overline{S}_\xi (\alpha_q)(p_* X_a) \\
	&= (\alpha_q - \alpha_\xi(q))(X_a) = -\xi(a) + \xi(a) \\
	&= 0.
	\end{split}
	\end{equation*}
	
	For the other inclusion, let $\eta_{\hat{q}} \in \Ann (p_* \mathcal{O})$ and choose 
	\[
	\alpha_q = p^* \eta_{\hat{q}} + \alpha_\xi(q).
	\]
	Then, for any $a \in \mathfrak{g}$,
	\[
	\alpha_q(X_a) = \eta_{\hat{q}}(p_* X_a) - \xi(a) = -\xi(a),
	\]
	so $\alpha_q \in \mu^{-1}(\xi)$. In addition, for any $Y$ vector field on $Q$, by the definition of $\alpha_q$,
	\[
	\varphi([\alpha_q])(p_* Y) = \overline{\varphi}_0 \overline{S}_\xi (\alpha_q)(p_* Y) = p^* \eta_{\hat{q}} (Y) = \eta_{\hat{q}}(p_* Y),
	\]
	hence $\varphi([\alpha_q]) = \eta_{\hat{q}}$.
	This concludes the proof.
\end{proof}
	
	\bigskip
	
It only remains to give the proof of:
\begin{lemma}
	\label{lem:thmEmbedding} In the context and with the notations of the previous theorem, for an $a \in \mathfrak{g}$, the following are equivalent
	\begin{enumerate}
		\item[\rm{(1)}] $\overline{S}_{\xi*}(\tilde{X}_a) = \tilde{X}_a - \xi(a) \tilde{\theta}^{\omega_{\tilde{\theta}}}; $
		\item[\rm{(2)}] $\mathcal{L}_{X_a} \alpha_\xi = \xi(a) \theta.$
	\end{enumerate}

\begin{proof}
	
	We make the necessary calculations in local coordinates. 
	
	Let $(U, (q_i)_{i=\overline{1,n}}) \subset Q$ be an open subset together with a chart, so $T^*Q_{|U} = T^* U \simeq U \times \mathbb{R}^n$. Denote by $(q_i, c_i)_{i=\overline{1,n}}$ the coordinates on $T^*U$. For any $a \in \mathfrak{g}$ and any $q \in U$, let
	\[
	(X_a)_q = \sum_{i=1}^{n} X^i_a(q) \frac{\partial}{\partial q_i}.
	\]
	We first derive the expression of $\tilde{X}_a$. Since $\pi_* \tilde{X}_a = X_a$, we have, for any $\alpha_q \in T^*U$, $\alpha_q = \sum_{i=1}^{n} \alpha_q^i dq_i$,
	\[
	(\tilde{X}_a)_{\alpha_q} = \sum_{i=1}^{n} X^i_a(q) \frac{\partial}{\partial q_i} + \sum_{i=1}^{n} b^i_a(q) \frac{\partial}{\partial c_i},
	\]
	where, by the definition of the $G$ action on $T^*Q$ as push-forward, the components
	\begin{equation*}
	\begin{split}
	b^k_a \left( q \right)   &=  \left( \frac{d}{dt}_{|{t=0}}  \left(   \left( e^{-ta} \right)  ^* \alpha_q \right)   \right)   \left( \frac{\partial}{\partial c_k} \right)   =  \left( \frac{d}{dt}_{|{t=0}}  \left(  \sum_{i=1}^{n} \alpha_q^i d  \left( e^{-ta} \cdot q \right)  _i  \right)   \right)   \left( \frac{\partial}{\partial c_k} \right)   \\ 
	&=  \left( \frac{d}{dt}_{|{t=0}}  \left(  \sum_{i=1}^{n} \alpha_q^i \sum_{j=1}^{n} \frac{\partial  \left( e^{-ta} \right)  _i}{\partial q_j} dq_j  \right)   \right)   \left( \frac{\partial}{\partial c_k} \right)   \\
	&= \frac{d}{dt}_{|{t=0}}  \left(  \sum_{i=1}^{n} \alpha_q^i \frac{\partial  \left( e^{-ta} \right)  _i}{\partial q_k}  \right)   = \sum_{i=1}^{n} \alpha_q^i \frac{\partial  \left( \frac{d}{dt}_{|{t=0}}  \left( e^{-ta} \right)  _i \right)  }{\partial q_k}  \\
	&= -\sum_{i=1}^{n} \alpha_q^i \frac{\partial X_a^i \left( q \right)  }{\partial q_k},
	\end{split}
	\end{equation*}
	hence
	\begin{equation}
	\label{eq:lem:thmEmbedding1}
	 \left( \tilde{X}_a \right)  _{\alpha_q} = \sum_{i=1}^{n} X^i_a \left( q \right)   \frac{\partial}{\partial q_i} - \sum_{i=1}^{n}  \left( \sum_{j=1}^{n} \alpha_q^j \frac{\partial X_a^j \left( q \right)  }{\partial q_i} \right)   \frac{\partial}{\partial c_i}.
	\end{equation}
	
	Let $\alpha_\xi(q) = \sum_{i=1}^n \alpha_\xi^i(q) dq_i$. By definition and using \eqref{eq:lem:thmEmbedding1},
	\begin{equation}
	\label{eq:lem:thmEmbedding2}
	\begin{split}
	\overline{S}_{\xi*} ((\tilde{X}_a )_{\alpha_q} )   &= \frac{d}{dt}_{|{t=0}}  \left( \overline{S}_{\xi} \left( e^{ta} \cdot \alpha_q \right)   \right)   = \frac{d}{dt}_{|{t=0}}  \left( e^{ta} \cdot \alpha_q - \alpha_\xi \left( e^{ta} \cdot q \right)   \right)   \\
	&= \sum_{i=1}^{n} X^i_a \left( q \right)   \frac{\partial}{\partial q_i} - \sum_{i=1}^{n}  \left( \sum_{j=1}^{n} \alpha_q^j \frac{\partial X_a^j \left( q \right)  }{\partial q_i} \right)   \frac{\partial}{\partial c_i} \\
	&- \sum_{i=1}^n \frac{d}{dt}_{|{t=0}} \left( \alpha_\xi^i \left( e^{ta} \cdot q \right)   \right)   \frac{\partial}{\partial c_i} \\
	&= \sum_{i=1}^{n} X^i_a \left( q \right)   \frac{\partial}{\partial q_i} - \sum_{i=1}^{n}  \left( \sum_{j=1}^{n}  \left( \alpha_q^j - \alpha_\xi^j \left( q \right)   \right)   \frac{\partial X_a^j \left( q \right)  }{\partial q_i} \right)   \frac{\partial}{\partial c_i} \\
	&- \sum_{i=1}^{n}  \left( \sum_{j=1}^{n} \alpha_\xi^j \left( q \right)   \frac{\partial X_a^j \left( q \right)  }{\partial q_i} \right)   \frac{\partial}{\partial c_i} - \sum_{i=1}^n X_a \left( \alpha_\xi^i \right)   \frac{\partial}{\partial c_i} \\
	&=  \left( \tilde{X}_a \right)  _{\alpha_q - \alpha_\xi \left( q \right)  } - \sum_{i=1}^{n}  \left( \sum_{j=1}^{n} \alpha_\xi^j \left( q \right)   \frac{\partial X_a^j \left( q \right)  }{\partial q_i} +  X_a \left( \alpha_\xi^i \right)    \right)   \frac{\partial}{\partial c_i}.
	\end{split}
	\end{equation}
	
	Let $\theta = \sum_{i=1}^n \theta_i dq_i$. The local expression of the tautological $1$-form $\eta$ on $T^*Q$ is 
	\[
	\eta = \sum_{i=1}^n c_i dq_i.
	\]
	Consequently, 
	\[
	\omega_{\tilde{\theta}} = d\eta - \theta \wedge \eta = \sum_{i=1}^n dc_i \wedge dq_i - \sum_{i<j}(\theta_i c_j - \theta_j c_i)dq_i \wedge dq_j.
	\]
	
	We have $\tilde{\theta}^{\omega_{\tilde{\theta}}} = \sum_{i=1}^n l_i \frac{\partial}{\partial q_i} + \sum_{i=1}^n p_i \frac{\partial}{\partial c_i}$, satisfying
	\[
	\omega_{\tilde{\theta}}(\tilde{\theta}^{\omega_{\tilde{\theta}}}, V) = \tilde{\theta}(V), \ \forall V \in T(T^*U)
	\]
	which, in local coordinates, is equivalent to
	\[
	\sum_{i=1}^n p_i v'_i - l_i v''_i - \sum_{i<j}(\theta_i c_j - \theta_j c_i)(l_i v'_j - l_j v'_i) = \sum_{i=1}^n \theta_i v'_i, \ \forall v'_i, v''_i \in \mathbb{R}
	\]
	\ie $ l_i = 0 \text{ and } p_i = \theta_i$ for all $i=\overline{1,n}$, therefore
	\begin{equation}
	\label{eq:lem:thmEmbedding3}
	\tilde{\theta}^{\omega_{\tilde{\theta}}} = \sum_{i=1}^n \theta_i \frac{\partial}{\partial c_i}
	\end{equation}
	(this again proves that $\tilde{\theta}^{\omega_{\tilde{\theta}}}$ is tangent to the fibers of $T^*Q$, as shown in the previous theorem).
	
	Now, 
	\begin{equation}
	\label{eq:lem:thmEmbedding4}
	\begin{split}
	\mathcal{L}_{X_a} \alpha_\xi &= \mathcal{L}_{X_a} (\sum_{i=1}^n \alpha_\xi^i dq_i) = \sum_{i=1}^n ((\mathcal{L}_{X_a} \alpha_\xi^i)dq_i + \alpha_\xi^i d (\mathcal{L}_{X_a} q_i)) \\
	&= \sum_{i=1}^n (X_a(\alpha_\xi^i)dq_i + \alpha_\xi^i d (X_a^i)) \\
	&= \sum_{i=1}^n (\alpha_\xi^i \sum_{j=1}^n \frac{\partial X_a^i}{\partial q_j} dq_j +  X_a(\alpha_\xi^i)dq_i) \\
	&= \sum_{i=1}^n (\sum_{j=1}^n \alpha_\xi^j \frac{\partial X_a^j}{\partial q_i} + X_a(\alpha_\xi^i))dq_i.
	\end{split}
	\end{equation}
	
	Comparing \eqref{eq:lem:thmEmbedding2}, \eqref{eq:lem:thmEmbedding3} and \eqref{eq:lem:thmEmbedding4}, the proof is complete.
\end{proof}
	
\end{lemma}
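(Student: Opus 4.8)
The statement is an equivalence between a push-forward identity for the shifting map $\overline{S}_\xi$ and the differential identity $\mathcal{L}_{X_a}\alpha_\xi = \xi(a)\theta$. My plan is to establish both sides by a direct computation in local coordinates on $T^*Q$, which is the cleanest route since all the objects involved (the lifted field $\tilde X_a$, the tautological $1$-form $\eta$, the LCS form $\omega_{\tilde\theta}$, and hence $\tilde\theta^{\omega_{\tilde\theta}}$) have explicit coordinate expressions, and $\overline{S}_\xi$ is fiberwise subtraction of $\alpha_\xi$, so $\overline{S}_{\xi*}$ can be computed by differentiating along the flow $e^{ta}$.

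\textbf{Step 1: coordinate setup and the lifted vector field.} Fix a chart $(U,(q_i))$ on $Q$ with induced coordinates $(q_i,c_i)$ on $T^*U$. Writing $(X_a)_q = \sum_i X_a^i(q)\,\partial/\partial q_i$, I would derive the fiber components of $\tilde X_a$ from the definition of the $G$-action on $T^*Q$ by push-forward: differentiating $(e^{-ta})^*\alpha_q$ at $t=0$ and using $\pi_*\tilde X_a = X_a$ (Proposition on the momentum map), one gets
\[
(\tilde X_a)_{\alpha_q} = \sum_i X_a^i(q)\frac{\partial}{\partial q_i} - \sum_i\Bigl(\sum_j \alpha_q^j\frac{\partial X_a^j(q)}{\partial q_i}\Bigr)\frac{\partial}{\partial c_i}.
\]

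\textbf{Step 2: push-forward through $\overline{S}_\xi$ and identification of $\tilde\theta^{\omega_{\tilde\theta}}$.} Since $\overline{S}_\xi(\alpha_q) = \alpha_q - (\alpha_\xi)_q$ and this commutes with the $G$-action only up to the correction $\alpha_\xi(e^{ta}\cdot q)$, I would compute $\overline{S}_{\xi*}((\tilde X_a)_{\alpha_q}) = \frac{d}{dt}|_{t=0}\bigl(e^{ta}\cdot\alpha_q - \alpha_\xi(e^{ta}\cdot q)\bigr)$ and, after rearranging the $\partial/\partial c_i$ terms at the shifted point $\alpha_q - \alpha_\xi(q)$, obtain
\[
\overline{S}_{\xi*}((\tilde X_a)_{\alpha_q}) = (\tilde X_a)_{\alpha_q - \alpha_\xi(q)} - \sum_i\Bigl(\sum_j \alpha_\xi^j(q)\frac{\partial X_a^j(q)}{\partial q_i} + X_a(\alpha_\xi^i)\Bigr)\frac{\partial}{\partial c_i}.
\]
Separately, from $\omega_{\tilde\theta} = d\eta - \theta\wedge\eta$ with $\eta = \sum_i a_i\,dq_i$, solving $\omega_{\tilde\theta}(\tilde\theta^{\omega_{\tilde\theta}},V) = \tilde\theta(V)$ identically in $V$ forces the base components to vanish and the fiber components to equal $\theta_i$, giving $\tilde\theta^{\omega_{\tilde\theta}} = \sum_i\theta_i\,\partial/\partial c_i$ — a purely vertical field.

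\textbf{Step 3: comparison.} Condition (1), $\overline{S}_{\xi*}(\tilde X_a) = \tilde X_a - \xi(a)\tilde\theta^{\omega_{\tilde\theta}}$, then reads, in the fiber components at $\alpha_q - \alpha_\xi(q)$, as the identity $\sum_j\alpha_\xi^j(q)\frac{\partial X_a^j}{\partial q_i} + X_a(\alpha_\xi^i) = \xi(a)\theta_i$ for all $i$. On the other hand, expanding $\mathcal{L}_{X_a}\alpha_\xi$ with $\alpha_\xi = \sum_i\alpha_\xi^i\,dq_i$ (using $\mathcal{L}_{X_a}q_i = X_a^i$ and $\mathcal{L}_{X_a}dq_i = d(X_a^i)$) gives exactly $\mathcal{L}_{X_a}\alpha_\xi = \sum_i\bigl(\sum_j\alpha_\xi^j\frac{\partial X_a^j}{\partial q_i} + X_a(\alpha_\xi^i)\bigr)dq_i$, so condition (2) is the same system of scalar equations. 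The equivalence follows by matching the two expressions coefficient by coefficient. The main thing to be careful about — the only genuine obstacle — is bookkeeping: correctly isolating the ``$(\tilde X_a)_{\alpha_q - \alpha_\xi(q)}$'' piece in the $\overline{S}_{\xi*}$ computation (one has to add and subtract $\sum_j\alpha_\xi^j\partial X_a^j/\partial q_i$ inside the $c_i$-components), and confirming that $\overline{S}_{\xi*}\tilde\theta^{\omega_{\tilde\theta}} = \tilde\theta^{\omega_{\tilde\theta}}$ so that the right-hand side of (1) is evaluated at the correct point; both are routine once the verticality of $\tilde\theta^{\omega_{\tilde\theta}}$ is in hand.
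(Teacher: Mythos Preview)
Your proposal is correct and follows essentially the same approach as the paper's proof: a direct local-coordinate computation of $\tilde X_a$, of $\overline{S}_{\xi*}\tilde X_a$ (via differentiation along the flow and the add-and-subtract trick you describe), of $\tilde\theta^{\omega_{\tilde\theta}}$, and of $\mathcal{L}_{X_a}\alpha_\xi$, followed by a coefficient-by-coefficient comparison. The structure, the key formulas, and even the bookkeeping subtleties you flag match the paper's argument exactly.
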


\section{Examples}

We now turn to a few classes of examples.

As opposed to the symplectic case (see \ref{prop:alphaxiSymplectic}), the conditions of \ref{thm:embedding} for the existence of $\alpha_\xi$ are not always satisfied. We will see this by constructing a class of examples of cotangent bundles whose reductions at non-zero regular values are not (generally) cotangent bundles, so on such manifolds no $\alpha_\xi$ with the required properties can exist.

We first prove an elementary lemma:

\begin{lemma}
\label{lem:RxM}
	Let $M$ be a manifold with a free $S^1$ action. Take $\xi \in \mathbb{R}, \xi \neq 0$ and consider the action of $\mathbb{R}$ on $\mathbb{R} \times M$,
	\[
	s \cdot (t, m) = (t + \xi s, e^{is} \cdot m), \ \forall (t, m) \in \mathbb{R} \times M, \ \forall s \in \mathbb{R}.
	\]
	
	Then there is a diffeomorphism $\varphi: (\mathbb{R} \times M)/\mathbb{R}  \xlongrightarrow{\sim} M$.
	
	If $\alpha \in \Omega^k(\mathbb{R} \times M)$ is invariant and basic with regards to this $\mathbb{R}$-action, then it descends to $\alpha_0 = \alpha_{| \{0\} \times M}$ on $M$.
	
\begin{proof}
	Consider the map
	\[
	\tilde{\varphi}: \mathbb{R} \times M \to M, \ \tilde{\varphi}(t,m) = e^{-\frac{it}{\xi}} \cdot m.
	\]
	This is obviously a surjective submersion and only depends on the equivalence class $[(t, m)]$ since, for any $s \in \mathbb{R}$,
	\begin{equation*}
	\begin{split}
	\tilde{\varphi}(t+s\xi, e^{is} \cdot m) &= e^{-\frac{i(t+s\xi)}{\xi}} \cdot (e^{is} \cdot m) = e^{-\frac{it}{\xi}} e^{-is} \cdot (e^{is} \cdot m) = e^{-\frac{it}{\xi}} \cdot m \\
	&= \tilde{\varphi}(t, m),
	\end{split}
	\end{equation*}
	so it descends to a diffeomorphism $\varphi: (\mathbb{R} \times M)/\mathbb{R} \to M$.
	
	Take now $\alpha \in \Omega^k(\mathbb{R} \times M)$ invariant and basic with regards to the $\mathbb{R}$-action. We want to show that $\tilde{\varphi}^* \alpha_0 = \alpha$. Indeed, denoting $i:\{0\} \times M \to \mathbb{R} \times M$ the natural inclusion,
	\[
	\tilde{\varphi}^* \alpha_0 = \tilde{\varphi}^* i^* \alpha = (i \circ \tilde{\varphi})^* \alpha,
	\]
	so we need $(i \circ \tilde{\varphi})^* \alpha = \alpha$. In order not to burden the notations, we prove this for $\alpha$ a $1$-form; the general case follows in exactly the same manner.
	
	Let $(u, v) \in T_t\R \times T_mM$. Note first that
	\begin{equation*}
		\begin{split}
	d_{(t, m)}\tilde{\varphi}(u, v) &=  \frac{d}{ds}_{|s=0} \tilde{\varphi}\left( t+su,  \gamma(s)\right) = \frac{d}{ds}_{|s=0} \left( e^{-\frac{i(t+su)}{\xi}} \cdot \gamma(s) \right) \\ &= \left( e^{-\frac{it}{\xi}} \cdot \right)_* \left( \frac{d}{ds}_{|s=0} \left( e^{-\frac{isu}{\xi}} \cdot \gamma(s) \right) \right) \\ &= \left( e^{-\frac{it}{\xi}} \cdot \right)_* \left( X_{-\frac{u}{\xi}}(m) + v \right),
		\end{split}
	\end{equation*}
	where $\gamma$ is an integral curve for $v$ in $m$ and $X_\cdot$ are the fundamental vector fields associated with the $S^1$-action on $M$. On the other hand, we have
	\[
	(i \circ \tilde{\varphi})(t, m) = (0, e^{-\frac{it}{\xi}} \cdot m)  = -\frac{t}{\xi} \cdot (t, m).
	\]
	We now use the two above equalities to compute
	\begin{equation*}
		\begin{split}
			((i \circ \tilde{\varphi})^* \alpha)_{(t, m)} (u, v) &= \alpha_{-\frac{t}{\xi} \cdot (t, m)} (i_* \tilde{\varphi}_* (u, v)) \\ &= \alpha_{-\frac{t}{\xi} \cdot (t, m)} \left( 0, \left( e^{-\frac{it}{\xi}} \cdot \right)_* \left( X_{-\frac{u}{\xi}}(m) + v \right) \right) \\ &= \alpha_{-\frac{t}{\xi} \cdot (t, m)} \left( -\frac{t}{\xi} \cdot \right)_* (0, X_{-\frac{u}{\xi}}(m) + v ) \\ &= \alpha_{(t, m)} (0, X_{-\frac{u}{\xi}}(m) + v) \\ &= \alpha_{(t, m)} (u, v),
		\end{split}
	\end{equation*}
	where for the second to last equality we used the $\R$-invariance of $\alpha$ and for the last we used that it is basic, so $\alpha_{(t, m)}(u, X_{\frac{u}{\xi}}(m)) = 0$. This concludes the proof.
\end{proof}

\end{lemma}

\begin{proposition}
\label{prop:alpha_xiNuE}
	Let $M$ be a manifold with a free $S^1$ action. This determines an action of $S^1$ on $Q = S^1 \times M$,
	\[
	e^{it} \cdot (z, m) = (z, e^{it} \cdot m), \ \forall (z, m) \in Q.
	\]
	Let $\theta$ be the canonical $1$-form on $S^1$; we also denote by $\theta$ its pullback to $Q$.
	
	Then the reduction of $(T^*(S^1 \times M), \tilde{\omega} = \omega_{\tilde{\theta}}, \tilde{\theta})$ with respect to a non-zero value $\xi \in \mathbb{R}$ is
	\[
	(T^*(S^1 \times M))_\xi \simeq (S^1 \times \mu'^{-1}(\xi), \tilde{\omega}_{| S^1 \times \mu'^{-1}(\xi)}, \tilde{\theta}_{| S^1 \times \mu'^{-1}(\xi)}),
	\]
	where $\mu':T^*M \to \mathbb{R}$ is the symplectic momentum mapping determined by the action of $S^1$ on $T^*M$.
	
\begin{proof}
	Recall from \ref{prop:momentumCTG} that we have
	\begin{equation*}
	\begin{split}
	\mu: T^*(S^1 \times M) &\simeq S^1 \times \mathbb{R} \times T^*M \to \mathbb{R}, \\
	\mu(z, t, \alpha_m) &=  \mu'(\alpha_m) = -\alpha_m(X),
	\end{split} 
	\end{equation*}
	where we denote by $X = X_1$ the fundamental vector field corresponding to $1 \in \mathbb{R} \simeq \mathfrak{g}$. Hence, 	
	\[
	\mu^{-1}(\xi) = S^1 \times \mathbb{R} \times \mu'^{-1}(\xi).
	\]
	Recall that
	\[
	(T^*(S^1 \times M))_\xi \simeq \mu^{-1}(\xi) / \langle \tilde{X} + \xi \tilde{\theta}^{\tilde{\omega}} \rangle,
	\] 
	where $\tilde{X}$ is the fundamental vector field of the action of $S^1$ on $T^*M$.
	
	A quick computation shows that $\tilde{\theta}^{\tilde{\omega}}$ is the generator of $T \mathbb{R}$, so the action of $\R$ on $\mu^{-1}(\xi) = \R \times (S^1 \times \mu'^{-1}(\xi))$ given by the foliation associated to the distribution $\langle \tilde{X} + \xi \tilde{\theta}^{\tilde{\omega}} \rangle$ is precisely of the type described in \ref{lem:RxM}. Now, $\alpha = \tilde{\omega} \in \Omega^2(\mu^{-1}(\xi))$ is both $\R$-invariant and basic automatically, as the theory of LCS reduction (see \ref{th:LCSred}) already assures us it descends to the quotient.

	In conclusion, \ref{lem:RxM} gives us
	\[
	(T^*(S^1 \times M))_\xi \simeq (S^1 \times \mu'^{-1}(\xi), \omega_{\tilde{\theta} | S^1 \times \mu'^{-1}(\xi)}, \tilde{\theta}_{| S^1 \times \mu'^{-1}(\xi)}),
	\]
	as claimed.	
\end{proof}
	
\end{proposition}

Note that, in particular, this implies that the reduces LCS manifold is not necessarily a cotangent bundle.

\begin{example}
	Take $M = S^3$ with the standard action of $S^1$. According to \ref{thm:embedding},
	\[
	(T^*(S^1 \times S^3))_0 \simeq T^* (S^1 \times \mathbb{CP}^1).
	\] 
	We have the momentum map
	\[
	\mu': T^*S^3 \simeq S^3 \times \mathbb{R}^3 \to \mathbb{R}, \ \mu'(q, (a, b, c)) = c,
	\]
	where we have a canonical trivialization for the cotangent bundle $T^*S^3$ arising from the quaternionic structure of $S^3$.
	
	Then, according to \ref{prop:alpha_xiNuE}, 
	\[
	(T^*(S^1 \times S^3))_\xi \simeq S^1 \times S^3 \times \mathbb{R}^2 \not \simeq T^* (S^1 \times \mathbb{CP}^1).
	\]

\end{example}

\bigskip

As opposed to the case described in \ref{prop:alpha_xiNuE}, where \ref{thm:embedding} doesn't work, there is a situation in which an $\alpha_\xi$ as required can be constructed using the results from the symplectic case:

\begin{proposition}
	If there exists a function $f \in \mathcal{C}^\infty(Q)$ such that, for all $a \in \mathfrak{g}_\xi$, $X_a(f) = \xi(a)$, then an $\alpha_\xi$ satisfying the hypotheses of \ref{thm:embedding}, namely
	\begin{equation*}
	\begin{split}
	(\alpha_\xi)_q &\in \mu^{-1}(\xi), \ \forall q \in Q, \\
	\mathcal{L}_{X_a} \alpha_\xi &= \xi(a) \theta, \ \forall a \in G_\xi.
	\end{split}
	\end{equation*}
	exists.
	
	\begin{proof}
		Take an $\beta_\xi$ as in \ref{prop:alphaxiSymplectic} \ie satisfying
		\begin{equation*}
		\begin{split}
		(\beta_\xi)_q &\in \mu^{-1}(\xi), \ \forall q \in Q, \\
		\mathcal{L}_{X_a} \beta_\xi &= 0, \ \forall a \in G_\xi.
		\end{split}
		\end{equation*}

	Then, taking $\alpha_\xi = \beta_\xi + f\theta$, we have
	\begin{equation*}
	\begin{split}
	(\alpha_\xi)_q &\in \mu^{-1}(\xi), \ \forall q \in Q, \\
	\mathcal{L}_{X_a} \alpha_\xi &= \mathcal{L}_{X_a} (\beta_\xi + f \theta) = X_a(f) \theta = \xi(a) \theta, \ \forall a \in G_\xi.
	\end{split}
	\end{equation*}
	\end{proof}
\end{proposition}

\begin{remark}
	Note that for the hypotheses of the above theorem to apply in a useful way, neither $M$ nor $G$ can be compact. Otherwise, $df$ would be zero in at least one point along an orbit of $G$, and the equality $X_a(f) = \xi(a)$ could not hold for $\xi \neq 0$.
	
	One very simple case where the hypotheses are satisfied is for an action of $\R$ on a non-compact manifold $Q$, by picking $f$ to be a function that increases linearly along the (only) fundamental vector field.
\end{remark}

\bigskip

\textbf{Acknowledgments.} I would like to thank my advisor Liviu Ornea for his guidance and suggestions throughout.

\end{document}